 \newtheorem{thm}{Theorem}[section]
 \newtheorem{cor}[thm]{Corollary}
 \theoremstyle{definition}
 \theoremstyle{remark}
 \newtheorem{rem}[thm]{Remark}
  \newtheorem{ques}[thm]{Question}
 \numberwithin{equation}{section}
\def\refn#1.#2{\expandafter\def\csname#1\endcsname{[#2]}}
\def\refnr#1.{\csname#1\endcsname}
\begin{document}
%
%
%
%
%
%
%
%
%
\title[Some Remarks On Essentially Normal Submodules]
 {Some Remarks On Essentially Normal  Submodules}
\author[R. G. Douglas]{Ronald G.~Douglas}

\address{%
Department of Mathematics,
Texas A\&M University, College Station, TX 77843, USA}

\email{rdouglas@math.tamu.edu}

\author[K. Wang]{Kai Wang}
\address{School of Mathematical Sciences,
Fudan University, Shanghai, 200433, P. R. China}
\email{kwang@fudan.edu.cn}
\thanks{ The second author was  supported by
  NSFC,   the Department of Mathematics  at Texas A\&M University and Laboratory of Mathematics for Nonlinear Science at Fudan University.}
\subjclass[2010]{ 47A13,
 46E22,46H25, 47A53.}

\keywords{Arveson conjecture, essentially normal submodules}


\begin{abstract} Given a $*$-homomorphism $\sigma: C(M)\to \mathscr{L}(\mathcal{H})$ on a Hilbert space $\mathcal{H}$ for a compact metric space $M$,
 a projection $P$ onto a subspace $\mathcal{P}$ in $\mathcal{H}$ is said to be essentially normal relative to $\sigma$ if $[\sigma(\varphi),P]\in \mathcal{K}$ for $\varphi\in C(M)$, where $\mathcal{K}$ is the ideal of compact operators on $\mathcal{H}$. In this note we  consider two notions of span      for essentially normal projections $P$ and $Q$, and investigate when they   are also essentially normal. First, we  show the representation theorem for two projections, and relate these results
 to  Arveson's conjecture for the closure of homogenous polynomial ideals on the Drury-Arveson space. Finally, we consider the relation between  the relative position of two essentially normal projections and the $K$ homology elements defined for  them.
\end{abstract}

\maketitle

\section{Introduction}
 Spurred by a question of Arveson \cite{Ar1,Ar2,Ar3,Ar4}, several researchers have been considering when certain submodules of various Hilbert modules of holomorphic functions on the unit ball in $\mathbb{C}^n$ are essentially normal.  In
particular, Guo and the second author showed in \cite{GW} that the closure of a principal homogenous polynomial ideal in the Drury-Arveson space in $\mathbb{B}^n$ is essentially normal.  More recently,
the authors have shown  in \cite{DW} that the closure of all principal polynomial ideals in the Bergman module  on the unit ball are essentially normal. Other results have been obtained by Arveson \cite{Ar5}, Douglas \cite{Dou,Dou1}, the first author and Sarkar \cite{DS},  Eschmeier \cite{Esch}, Kennedy \cite{Ke},  and Shalit \cite{Sha}.

The Arveson conjecture concerns the closure of an arbitrary  homogeneous polynomial ideal which, in general, is not singly  generated.  For the case of $n=1$, one knows that a pure hyponormal operator submodule is essentially normal if
  it is  finitely  generated.  The basis on which this result depends is the Berger-Shaw Theorem \cite{BS}.

For ideals that are not principal, or singly generated, the results in the several variable case are few. Guo in \cite{G} firstly proved    Arveson's conjecture in case of the dimension $n=2$. Guo and the second author established in \cite{GW2,GW} essential normality when $n=3$ or the dimension of
the zero variety of the homogeneous ideal is one or less, the opposite extreme, more or less, of the case of principal ideals.  There is also a result of Shalit  \cite{Sha} which holds for ideals having a "very nice" basis relative to the
norm. More recently, Kennedy \cite{Ke} extended that result in another direction, considering when the linear span of the closures of polynomial ideals is closed.
He gives some examples, but it would appear that not all non-principal ideals are  covered by this result. One should note that these results when the linear span of two essentially normal submodules is closed is implicit in the work of Arveson \cite[Theorem 4.4]{Ar5}.

In this note, we explore a  more general version of the question of when the linear span of two essentially normal submodules is also essentially normal.  We show that this result contains one aspect  of the
results of Shalit and Kennedy.

Our work does not depend on the special nature of the submodules; that is, we do not assume any connection with any underlying algebraic structure, only the fact that the linear span is closed.

There is more than one sense of the span of two submodules relevant in this context:  the first is the  obvious one defined to be  the closure   of the linear  span of
two submodules $\mathcal{P}$ and $\mathcal{Q}$, while the second one considers the span modulo the ideal of compact operators. If $P$ and $Q$ denote the orthogonal projections onto  $\mathcal{P}$ and $\mathcal{Q}$, respectively, then we will show that  this notion makes sense
if $0$ is an  isolated point in the essential spectrum of $P+Q$.

 We consider the first notion in Section 2, and  the results obtained are  based on the structure theorem for two projections. The latter notion of the essential span  is taken up in Section 3.
  We apply these results to the context of Arveson's conjecture and raise some questions. In particular, we assume that there is a $*$-homomorphism $\sigma$ of $C(M)$ for some compact metric space $M$ and the projections essentially commute with the range of $\sigma$.   Finally, in Section 4 we observe that an essentially normal projection determines an element of  the odd $K$-homology group for some compact subset of $M$ and  consider the relation of the K-homology
  elements defined by two essentially normal submodules and their sum.
\section{Refinement of the Two Projection Representation}\label{sec1}

\indent
Our results  in this section are based on refinements of the structure theorem for  two projections \cite{Br,H}.

Let $P$ and $Q$ be two projections on the Hilbert space $\mathcal{H}$.  Then there exist operators $S_1 : \mathcal{P} \to \mathcal{P}, S_2 : \mathcal{P}^\perp \to \mathcal{P}^\perp$ and $X: \mathcal{P}^\perp
\to \mathcal{P}$, where $\mathcal{P}={\text{ran }} {P}$   with
$0_{\mathcal{P}}\leq {S_1} \leq I_{\mathcal{P}}, {0_{\mathcal{P}^\perp}}  \leq{S_2}\leq I_{\mathcal{P}^\perp}$ and $ \|{X}\| \thinspace \leq 1$, such that
$$P=
\left(
\begin{array}{cc}
 I_{\mathcal{P}} & 0 \\0 & 0_{\mathcal{P}^\perp}
\end{array}
\right)
\text{ and }  Q=
\left(
\begin{array}{cc}
 S_1 & X \\X^* & S_2
\end{array}
\right).$$
Moreover, if we set $\mathcal{P}=\mathcal{P}_1 \oplus \mathcal{P}_2 \oplus \mathcal{P}_3$ and
$\mathcal{P}^\perp=\mathcal{Q}_1 \oplus \mathcal{Q}_2 \oplus \mathcal{Q}_3$, where
 $\mathcal{P}_1=\{x \in \mathcal{P}: \thinspace S_1{x}=0\}, \mathcal{P}_2=\{x \in \mathcal{P}: \thinspace S_1{x}=x\}$,
  $\mathcal{P}_3=\mathcal{P}\ominus(\mathcal{P}_1\oplus\mathcal{P}_2)$,  $\mathcal{Q}_2=\{x \in \mathcal{P}^\perp: \thinspace S_2{x}=x\}$,
$\mathcal{Q}_3=\{x \in \mathcal{P}^\perp: \thinspace S_2{x}=0\}$, and  $\mathcal{Q}_1=\mathcal{P}^\perp\ominus(\mathcal{Q}_2\oplus\mathcal{Q}_3)$, then we have
\begin{align*}
S_1 & =
\left(
\begin{array}{ccc}
 0_{\mathcal{P}_1} & 0 & 0 \\
 0 & I_{\mathcal{P}_2} & 0 \\
 0 & 0 & {S'_1}
\end{array}
\right) \in \mathscr{L}(\mathcal{P}_1 \oplus \mathcal{P}_2 \oplus \mathcal{P}_3)  \text{ with } S'_1\in \mathscr{L}(\mathcal{P}_3),\\
S_2 & =
\left(
\begin{array}{ccc}
 {S'_2} & 0 & 0 \\
 0 & I_{\mathcal{Q}_2} & 0 \\
 0 & 0 & 0_{\mathcal{Q}_3}
\end{array}
\right) \in \mathscr{L}(\mathcal{Q}_1 \oplus \mathcal{Q}_2 \oplus \mathcal{Q}_3)  \text{ with } S'_2\in \mathscr{L}(\mathcal{Q}_1), \text{ and}\end{align*}
\begin{align*}
X & =
\left(
\begin{array}{ccc}
 0 & 0 & 0 \\
 0 & 0 & 0 \\
 X' & 0 & 0
\end{array}
\right) \in \mathscr{L}(\mathcal{Q}_1 \oplus \mathcal{Q}_2 \oplus \mathcal{Q}_3, \mathcal{P}_1 \oplus \mathcal{P}_2 \oplus \mathcal{P}_3) \text{ with } X'\in \mathscr{L}(\mathcal{Q}_1, \mathcal{P}_3).
\end{align*}
These results are all straightforward.

Further, using matrix computations and the fact that $Q^2=Q=Q^\ast$,  one shows that there exists an isometry $V$ from $\mathcal{Q}_1$ onto $\mathcal{P}_3$ such that $V^* S'_1 V=I_{\mathcal{Q}_1}-S'_2$. We refer the reader to \cite{H} for a detailed argument. Therefore, we have derived the standard model for two projections.
\begin{thm} Two projections  $P$
and $Q$ on a Hilbert space $\mathcal{H}$ are determined by

\indent(1) a decomposition $\mathcal{H}=\mathcal{H}_0 \oplus \mathcal{H}_1 \oplus \mathcal{H}' \oplus \mathcal{H}' \oplus \mathcal{H}_2 \oplus \mathcal{H}_3$, and

\indent(2) a positive contraction $S \in \mathcal{L}(\mathcal{H}')$ with $\{0, 1\}$ not in its point spectrum.

In this case, one has
$$
 P=
\left(
\begin{array}{cccccc}
 I_{\mathcal{H}_0} & 0 & 0 & 0 & 0 & 0\\
 0 & I_{\mathcal{H}_1} & 0 & 0 & 0 & 0 \\
 0 & 0 & I_{\mathcal{H}'} & 0 & 0 & 0 \\
 0 & 0 & 0 & 0_{\mathcal{H}'} & 0 & 0 \\
 0 & 0 & 0 & 0 & 0_{\mathcal{H}_2} & 0 \\
 0 & 0 & 0 & 0 & 0 & 0_{\mathcal{H}_3} \\
\end{array}
\right) \text{ and }
Q=
\left(
\begin{array}{cccccc}
 0_{\mathcal{H}_0} & 0 & 0 & 0 & 0 & 0\\
 0 & I_{\mathcal{H}_1} & 0 & 0 & 0 & 0\\
 0 & 0 & S & X & 0 & 0\\
 0 & 0 & X & I_{\mathcal{H}'}-S  & 0 & 0       \\
  0 & 0 & 0 & 0 & I_{\mathcal{H}_2} & 0   \\
  0 &  0 & 0 & 0 & 0 & 0_{\mathcal{H}_3}
\end{array}
\right),
$$ where $X=\sqrt{S(I_{\mathcal{H}'}-S)} \in \mathcal{L}(\mathcal{H}')$.\end{thm}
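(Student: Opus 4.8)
The plan is to obtain the standard model by matching the six summands of $\mathcal{H}$ to the blocks already assembled in the refined decomposition, so that the only genuine work sits in the $\mathcal{P}_3\oplus\mathcal{Q}_1$ corner. I would set $\mathcal{H}_0=\mathcal{P}_1$, $\mathcal{H}_1=\mathcal{P}_2$, $\mathcal{H}_2=\mathcal{Q}_2$, $\mathcal{H}_3=\mathcal{Q}_3$ and $\mathcal{H}'=\mathcal{P}_3$, with $S=S_1'$. Because $\mathcal{P}_3=\mathcal{P}\ominus(\mathcal{P}_1\oplus\mathcal{P}_2)$ was defined precisely to remove $\ker S_1$ and $\ker(I-S_1)$, the contraction $S$ has neither $0$ nor $1$ in its point spectrum, which is requirement (2). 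First I would check that $P$ and $Q$ collapse to scalars on the four summands $\mathcal{P}_1,\mathcal{P}_2,\mathcal{Q}_2,\mathcal{Q}_3$: since the block form of $X$ is supported on $\mathcal{Q}_1$ with range in $\mathcal{P}_3$, it is invisible to these pieces, so $(P,Q)$ equals $(I,0)$ on $\mathcal{P}_1$, $(I,I)$ on $\mathcal{P}_2$, $(0,I)$ on $\mathcal{Q}_2$, and $(0,0)$ on $\mathcal{Q}_3$, matching $\mathcal{H}_0,\mathcal{H}_1,\mathcal{H}_2,\mathcal{H}_3$ respectively.

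The substance is the corner $\mathcal{P}_3\oplus\mathcal{Q}_1$, on which $P=\mathrm{diag}(I_{\mathcal{P}_3},0_{\mathcal{Q}_1})$ and $Q$ restricts to $\left(\begin{smallmatrix}S_1' & X'\\ X'^{*}& S_2'\end{smallmatrix}\right)$. Expanding $Q=Q^{*}=Q^{2}$ there, I would read off $X'X'^{*}=S_1'(I-S_1')$ together with the intertwining relation $S_1'X'=X'(I-S_2')$. Both $S_1'$ on $\mathcal{P}_3$ and $S_2'$ on $\mathcal{Q}_1$ avoid $\{0,1\}$ in their point spectra (the latter because $\mathcal{Q}_1$ excises $\ker S_2$ and $\ker(I-S_2)$), so $X'X'^{*}=S_1'(I-S_1')$ and $X'^{*}X'=S_2'(I-S_2')$ are both injective. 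Hence $X'$ has trivial kernel and cokernel, and its (reversed) polar decomposition may be written $X'=\sqrt{S_1'(I-S_1')}\,U_0$ with $U_0:\mathcal{Q}_1\to\mathcal{P}_3$ a genuine unitary.

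I would then take $V=U_0$, identify $\mathcal{Q}_1$ with $\mathcal{H}'=\mathcal{P}_3$, and conjugate the corner by $\mathrm{diag}(I_{\mathcal{P}_3},V)$. This leaves $P$ unchanged and turns $Q$ into $\left(\begin{smallmatrix}S & X'V^{*}\\ VX'^{*}& VS_2'V^{*}\end{smallmatrix}\right)$. The off-diagonal entry is $X'V^{*}=\sqrt{S(I-S)}\,U_0U_0^{*}=\sqrt{S(I-S)}$, which is exactly the positive operator $X$ in the statement. For the lower-right block, I would substitute the polar form into $S_1'X'=X'(I-S_2')$ to obtain $\sqrt{S(I-S)}\,\bigl(S\,U_0-U_0(I-S_2')\bigr)=0$; cancelling the injective factor on the left gives $V^{*}SV=I-S_2'$, equivalently $VS_2'V^{*}=I-S$, which is the relation already recorded before the statement via the isometry $V$.

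The cancellation step is where I expect the only real difficulty to lie: it is precisely the absence of $0$ and $1$ from the point spectrum of $S$ that makes $\sqrt{S(I-S)}$ injective and licenses passing from $\sqrt{S(I-S)}\,A=0$ to $A=0$; without it one could only conclude that $X'V^{*}$ has the correct modulus, not that it equals the positive square root. Conversely, given the data (1)--(2), a direct computation using $X=\sqrt{S(I-S)}$ shows that the displayed matrices satisfy $P=P^{*}=P^{2}$ and $Q=Q^{*}=Q^{2}$, so the pair of projections is indeed determined by the decomposition together with the contraction $S$.
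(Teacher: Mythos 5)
Your argument is correct and follows essentially the same route as the paper, which sets up the refined decomposition $\mathcal{P}=\mathcal{P}_1\oplus\mathcal{P}_2\oplus\mathcal{P}_3$, $\mathcal{P}^\perp=\mathcal{Q}_1\oplus\mathcal{Q}_2\oplus\mathcal{Q}_3$ and the isometry $V:\mathcal{Q}_1\to\mathcal{P}_3$ with $V^*S_1'V=I_{\mathcal{Q}_1}-S_2'$ before the theorem and then simply asserts that the model follows from ``standard matrix computations,'' deferring the details of $V$ to Halmos. You have merely written out those computations explicitly --- the identification of the four scalar blocks, the relations $X'X'^*=S_1'(I-S_1')$ and $S_1'X'=X'(I-S_2')$ from $Q=Q^*=Q^2$, and the polar-decomposition construction of $V$ --- all of which is the intended argument.
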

\begin{proof}
Again,  the representation results follow from standard matrix computations.\end{proof}
The following question   happens frequently  in many concrete problems in   operator theory.
\begin{ques} When is $\mathcal{P}+\mathcal{Q}$ closed in $\mathcal{H}$, where $\mathcal{P}=\text{ran }P$ and $\mathcal{Q}=\text{ran }Q$?\end{ques}

Note we have:

\begin{align*}
\mathcal{P} & =
\left\{
\left(
\begin{array}{c}
 x_0 \\
 x_1 \\
 x' \\
 0 \\
 0 \\
 0
\end{array}
\right)
: x_0 \in \mathcal{H}_0, x_1 \in \mathcal{H}_1 , x' \in \mathcal{H}' \right\} \text{ and }\end{align*}
\begin{align*}
\mathcal{Q} & =
\left\{
\left(
\begin{array}{c}
 0 \\
 x_1 \\
 Sx'+Xy' \\
 Xx'+(I_{\mathcal{H}'}-S)y' \\
 x_2 \\
 0
\end{array}
\right): x_1 \in \mathcal{H}_1 , x', y' \in \mathcal{H}' , x_2 \in \mathcal{H}_2 \right\}.
\end{align*}

Therefore,
\begin{equation*}
\mathcal{P}+\mathcal{Q}=
\left\{
\left(
\begin{array}{c}
 x_0 \\
 x_1 \\
 x' \\
 z \\
 x_2 \\
 0
\end{array}
\right) : x_0 \in \mathcal{H}_0 , x_1 \in \mathcal{H}_1, x' \in \mathcal{H}', z \in \text{ ran} X +\text{ ran}(I_{\mathcal{H}'}-S), x_2 \in \mathcal{H}_2 \right\}.
\end{equation*}
This implies that $\mathcal{P}+\mathcal{Q}$ is closed if and only if $\text{ran }X+ \text{ ran}(I_{\mathcal{H}'}-S)$ is closed.  Since $X=\sqrt{S(I_{\mathcal{H}'}-S)}$, we have  that
$\text{ran }X \subseteq \text{ran}(I_{\mathcal{H}'}-S)^{\frac{1}{2}}  $. Moreover, by the spectral theorem   for the positive contraction $S$, one sees that $\sqrt{S}+\sqrt{I_{\mathcal{H}'}-S}$ is invertible on $\mathcal{H}'$. This implies that $\text{ran }X +\text{ran}(I_{\mathcal{H}'}-S) \supseteq \text{ran}(I_{\mathcal{H}'}-S)^{\frac{1}{2}}$. Therefore, $\text{ran }X+\text{ran}(I_{\mathcal{H}'}-S)$ is closed if and only if $\text{ran}(I_{\mathcal{H}'}-S)^{\frac{1}{2}}$ is closed.   Since $1$ is not in the point spectrum of $S$, it
follows from the spectral theorem  that $\text{ran}(I_{\mathcal{H}'}-S)^{\frac{1}{2}}$ is closed if and only if $1$ is not in the spectrum of $S$. Hence we have the following result.
\begin{thm} For two projections $P$ and $Q$ on the Hilbert space $\mathcal{H}$ with $\mathcal{P}=\textrm{ ran} P$ and $\mathcal{Q}=\textrm{ ran} Q$, the linear span $\mathcal{P}+\mathcal{Q}$ is closed if and only if $1\notin\sigma(S)$, or equivalently, $\sigma(PQP)\cap (\varepsilon,1)=\phi$ for some $0<\varepsilon<1$, where $S$ is the same as in Theorem 2.1. Moreover, in the case that $\mathcal{R}=\mathcal{P}+\mathcal{Q}$ is closed,
the projection $R$ onto $\mathcal{R}$ has the form
$$
 R=
\left(
\begin{array}{cccccc}
 I_{\mathcal{H}_0} & 0 & 0 & 0 & 0 & 0\\
 0 & I_{\mathcal{H}_1} & 0 & 0 & 0 & 0 \\
 0 & 0 & I_{\mathcal{H}'} & 0 & 0 & 0 \\
 0 & 0 & 0 & I_{\mathcal{H}'} & 0 & 0 \\
 0 & 0 & 0 & 0 & I_{\mathcal{H}_2} & 0 \\
 0 & 0 & 0 & 0 & 0 & 0_{\mathcal{H}_3} \\
\end{array}
\right)$$
\end{thm}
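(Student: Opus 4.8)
The plan is to read everything off the canonical form of Theorem 2.1 together with the explicit description of $\mathcal{P}+\mathcal{Q}$ obtained in the computation immediately preceding the statement. That computation already establishes the principal equivalence: $\mathcal{P}+\mathcal{Q}$ is closed iff $\operatorname{ran}X+\operatorname{ran}(I_{\mathcal{H}'}-S)$ is closed, and via the inclusion $\operatorname{ran}X\subseteq\operatorname{ran}(I_{\mathcal{H}'}-S)^{1/2}$ and the invertibility of $\sqrt{S}+\sqrt{I_{\mathcal{H}'}-S}$, this collapses to the closedness of $\operatorname{ran}(I_{\mathcal{H}'}-S)^{1/2}$, which (because $1\notin\sigma_p(S)$ by Theorem 2.1) holds precisely when $1\notin\sigma(S)$. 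So for the first assertion I would simply invoke that discussion, and concentrate the remaining argument on the $PQP$ reformulation and the shape of $R$.

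First I would compute $PQP$ in the canonical coordinates. Since $P$ is the diagonal projection onto the first three summands $\mathcal{H}_0\oplus\mathcal{H}_1\oplus\mathcal{H}'$, the operator $PQP$ is the compression of $Q$ to those summands, padded by zero, namely $PQP=0_{\mathcal{H}_0}\oplus I_{\mathcal{H}_1}\oplus S\oplus 0_{\mathcal{H}'}\oplus 0_{\mathcal{H}_2}\oplus 0_{\mathcal{H}_3}$. Consequently $\sigma(PQP)\subseteq\sigma(S)\cup\{0,1\}$, and since neither $0$ nor $1$ lies in the open interval $(0,1)$, the intersection of $\sigma(PQP)$ with $(0,1)$ equals $\sigma(S)\cap(0,1)$.

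Then I would argue the equivalence in both directions. If $1\notin\sigma(S)$, then, $\sigma(S)$ being a closed subset of $[0,1]$, we have $\sup\sigma(S)<1$; choosing any $\varepsilon\in(\sup\sigma(S),1)$ gives $\sigma(PQP)\cap(\varepsilon,1)=\emptyset$. Conversely, suppose $\sigma(PQP)\cap(\varepsilon,1)=\emptyset$ for some $0<\varepsilon<1$; then $\sigma(S)\cap(\varepsilon,1)=\emptyset$, so $\sigma(S)\subseteq[0,\varepsilon]\cup\{1\}$. Were $1\in\sigma(S)$, it would be an isolated point of the spectrum of the self-adjoint operator $S$, hence an eigenvalue, contradicting $1\notin\sigma_p(S)$; thus $1\notin\sigma(S)$. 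This single step — invoking that an isolated point of the spectrum of a self-adjoint operator is an eigenvalue, which is exactly where the normalization $1\notin\sigma_p(S)$ from Theorem 2.1 is indispensable — is the only genuinely non-mechanical point, and the one I would take most care over; everything else is bookkeeping in the canonical form.

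Finally, for the form of $R$, I would use that closedness of $\mathcal{R}$ forces $1\notin\sigma(S)$, so that $I_{\mathcal{H}'}-S$ is bounded below and therefore surjective, giving $\operatorname{ran}(I_{\mathcal{H}'}-S)=\mathcal{H}'$. Substituting this into the displayed description of $\mathcal{P}+\mathcal{Q}$, the fourth coordinate $z$ now ranges over all of $\mathcal{H}'$, whence $\mathcal{R}=\mathcal{H}_0\oplus\mathcal{H}_1\oplus\mathcal{H}'\oplus\mathcal{H}'\oplus\mathcal{H}_2\oplus\{0\}$. The orthogonal projection onto this diagonal block subspace is exactly the stated operator $R$, which completes the proof.
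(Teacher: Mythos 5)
Your proposal is correct and follows essentially the same route as the paper: the main equivalence is read off the range computation preceding the theorem, and the form of $R$ comes from $\operatorname{ran}(I_{\mathcal{H}'}-S)=\mathcal{H}'$ once $1\notin\sigma(S)$. The only addition is your explicit verification of the $\sigma(PQP)\cap(\varepsilon,1)=\emptyset$ reformulation via $PQP=0_{\mathcal{H}_0}\oplus I_{\mathcal{H}_1}\oplus S\oplus 0$ and the fact that an isolated spectral point of a self-adjoint operator is an eigenvalue; the paper asserts this equivalence without proof, and your argument for it is sound.
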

\begin{proof}
Only the last statement remains to be proved and that follows from the fact that $1 \notin \sigma( S)$ if and only if $0\notin\sigma( I_{\mathcal{H}'}-S)$, which implies $\text{ ran} (I_{\mathcal{H}'}-S)=\mathcal{H}'$.
\end{proof}
A nearly immediate consequence of the representation theorem is the following characterization   of the $C^*$-algebra $\mathcal{A}(P,Q,I)$ generated by projections $P$, $Q$ and the identity operator on the Hilbert space $\mathcal{H} $. This is usually attributed  to Dixmier \cite{Di}.
\begin{thm}
Let  $P$ and $Q$ be the projections onto the subspaces $\mathcal{P}$ and $\mathcal{Q}$  of the Hilbert space $\mathcal{H}$, respectively. If $\mathcal{P}\cap \mathcal{Q}=\mathcal{P}^\perp\cap \mathcal{Q}=\mathcal{P}\cap \mathcal{Q}^\perp=\mathcal{P}^\perp\cap \mathcal{Q}^\perp=\{0\},$ then  $  \mathcal{A}(P,Q,I)$ is $*-$algebaically  isomorphic to a *-subalgebra $\mathcal{C} $ of $M_2(C(M)), $  where $M=\sigma(PQP)$, $M_2(C(M))$ denotes the algebra of two by two matrices  with entries  in $C(M)$ and $$\mathcal{C}=\{\left(
\begin{array}{cc }
\phi_{11} & \phi_{12}  \\
 \phi_{21} &  \phi_{22}
\end{array}
\right)\in M_2(C(M)):\phi_{12}(i)=\phi_{21}(i)=0, \text{if } i=0,1 \text{ and } i\in M\}.$$

\end{thm}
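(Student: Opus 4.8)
The plan is to derive everything from the standard model of Theorem 2.1 by first using the four triviality hypotheses to collapse the representation. Recall from the construction preceding Theorem 2.1 that $\mathcal{H}_1$ is the subspace on which $P=Q=I$, $\mathcal{H}_0$ the one on which $P=I,\,Q=0$, $\mathcal{H}_2$ the one on which $P=0,\,Q=I$, and $\mathcal{H}_3$ the one on which $P=Q=0$; equivalently $\mathcal{H}_1=\mathcal{P}\cap\mathcal{Q}$, $\mathcal{H}_0=\mathcal{P}\cap\mathcal{Q}^\perp$, $\mathcal{H}_2=\mathcal{P}^\perp\cap\mathcal{Q}$, and $\mathcal{H}_3=\mathcal{P}^\perp\cap\mathcal{Q}^\perp$. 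The hypothesis that all four intersections vanish forces $\mathcal{H}_0=\mathcal{H}_1=\mathcal{H}_2=\mathcal{H}_3=\{0\}$, so that $\mathcal{H}=\mathcal{H}'\oplus\mathcal{H}'$ and, in this decomposition,
\[
P=\left(\begin{array}{cc} I & 0 \\ 0 & 0\end{array}\right),\qquad
Q=\left(\begin{array}{cc} S & X \\ X & I-S\end{array}\right),\qquad
X=\sqrt{S(I-S)},
\]
where $S$ is a positive contraction on $\mathcal{H}'$ with $\{0,1\}$ disjoint from its point spectrum.

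Next I would realize $\mathcal{A}(P,Q,I)$ concretely. Since $P$, $Q$ and $I$ are all $2\times 2$ matrices with entries in the abelian $C^*$-algebra $\mathcal{B}:=C^*(S,I_{\mathcal{H}'})$, and $M_2(\mathcal{B})$ is itself a $C^*$-algebra, we have $\mathcal{A}(P,Q,I)\subseteq M_2(\mathcal{B})$. By the continuous functional calculus, $\mathcal{B}$ is $*$-isomorphic to $C(\sigma(S))$ with $S$ corresponding to the coordinate function $\iota$; this induces a $*$-isomorphism $\Phi:M_2(\mathcal{B})\to M_2(C(\sigma(S)))$. Under $\Phi$, $P$ becomes the constant matrix $\mathrm{diag}(1,0)$ and $Q$ becomes $\left(\begin{smallmatrix}\iota & \psi \\ \psi & 1-\iota\end{smallmatrix}\right)$ with $\psi=\sqrt{\iota(1-\iota)}$, so $\Phi$ is injective (being the restriction of an isomorphism) and it only remains to compute its range.

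I would then show $\Phi(\mathcal{A}(P,Q,I))=\mathcal{C}$ in two steps. For the inclusion $\subseteq$, note that $\mathcal{C}$ is a norm-closed $*$-subalgebra of $M_2(C(M))$: the off-diagonal vanishing conditions at $\{0,1\}\cap M$ are preserved under adjoints, sums, and products, since in a product each off-diagonal entry is a sum of (diagonal)$\times$(off-diagonal) terms. As the generators $\Phi(P),\Phi(Q),\Phi(I)$ all lie in $\mathcal{C}$, so does the algebra they generate. For the reverse inclusion I would produce all of $\mathcal{C}$. Writing $P^\perp=I-P$ and $Q^\perp=I-Q$, one computes $\Phi(PQP)=\mathrm{diag}(\iota,0)$ and $\Phi(P^\perp Q^\perp P^\perp)=\mathrm{diag}(0,\iota)$; together with $\Phi(P)=\mathrm{diag}(1,0)$ and $\Phi(P^\perp)=\mathrm{diag}(0,1)$, a Stone--Weierstrass argument inside each diagonal corner (where $\iota$ and $1$ generate $C(M)$, as $\iota$ separates the points of $M\subseteq[0,1]$) yields every diagonal matrix $\mathrm{diag}(f,g)$ with $f,g\in C(M)$. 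For the off-diagonal part, $\Phi(P^\perp Q P)=\left(\begin{smallmatrix}0 & 0 \\ \psi & 0\end{smallmatrix}\right)$, and multiplying on the left and right by the diagonal matrices already obtained realizes every lower-left entry in the closed ideal of $C(M)$ generated by $\psi$; the symmetric computation with $\Phi(PQP^\perp)$ handles the upper-right entry. Combining the two steps gives exactly $\mathcal{C}$.

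The genuinely delicate point, and the step I would treat most carefully, is the identification of this off-diagonal ideal. The closed ideal of $C(M)$ generated by $\psi=\sqrt{\iota(1-\iota)}$ is precisely $\{h\in C(M):h|_{Z(\psi)}=0\}$, where $Z(\psi)=M\cap\{0,1\}$ is the zero set of $\psi$; this is the standard description of the ideal generated by a single positive function, obtained by approximating such an $h$ by $h\,\psi\,(\psi^{2}+n^{-1})^{-1}\,\psi$. This is exactly the vanishing condition defining $\mathcal{C}$, and it is here that the hypothesis $\{0,1\}\cap\sigma_p(S)=\varnothing$ is fully used. A second bookkeeping point deserves attention: on $\mathcal{H}'\oplus\mathcal{H}'$ one has $PQP=S\oplus 0$, so $\sigma(PQP)=\sigma(S)\cup\{0\}$, and the functional calculus driving $\Phi$ is that of $S$ on $\sigma(S)$; one should therefore check that $M$ in the statement is matched with $\sigma(S)$ under this normalization (the two agreeing whenever $0$ is a limit point of $\sigma(S)$).
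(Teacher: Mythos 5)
Your proof is correct and follows essentially the same route as the paper --- reduce to the standard two-by-two model over $\mathcal{H}'\oplus\mathcal{H}'$ (the four triviality hypotheses killing $\mathcal{H}_0,\mathcal{H}_1,\mathcal{H}_2,\mathcal{H}_3$) and apply the continuous functional calculus of $S$ --- while supplying the surjectivity onto $\mathcal{C}$ (Stone--Weierstrass on the diagonal corners, the closed ideal generated by $\sqrt{\iota(1-\iota)}$ for the off-diagonal entries) that the paper leaves implicit. Your closing bookkeeping remark is moreover a genuine catch: since $PQP=S\oplus 0$ on $\mathcal{H}'\oplus\mathcal{H}'$, one has $\sigma(PQP)=\sigma(S)\cup\{0\}$, so when $0\notin\sigma(S)$ the theorem's $M$ must be read as $\sigma(S)$ (equivalently $\sigma(PQP|_{\mathcal{P}})$) for the stated identification with $\mathcal{C}$ to be exact.
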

\begin{proof}Applying the spectral theorem  to the operators $I_{\mathcal{P}}$ and $S$, one obtains the correspondence  from which the result follows: $$
 P=
\left(
\begin{array}{cc }
 I_{\mathcal{P}} & 0  \\
 0 &   0
\end{array}
\right) \thicksim \left(\begin{array}{cc }
 1 & 0  \\
 0 &   0
\end{array}
\right)\in M_2(C(M))\text{ and }$$
$$
Q=\left(
\begin{array}{cc }
 S & \sqrt{S(1-S)}  \\
 \sqrt{S(1-S)} &  1-S
\end{array}
\right)\thicksim \left(\begin{array}{cc }
 \chi & \sqrt{\chi(1-\chi)}  \\
 \sqrt{\chi(1-\chi)} &  1-\chi
\end{array}
\right)\in M_2(C(M)),
$$
where $1$ and $\chi$ denote the functions on $M$ defined by $1(x)=1$ and $\chi (x)=x$ for $x\in M$. The fact that the functions $\phi_{12}$ and $\phi_{21}$ in the definition of $\mathcal{C}$ vanish at $0,1\in M$ follows from the fact that the function $\sqrt{\chi (1-\chi)}$ does.\end{proof}
We now use the characterization of the $C^*$-algebra generated by two projections to get our first result on the essential normality of the projection onto the linear span when it is closed.
\begin{thm}
For two projections $P$ and $Q$ on the Hilbert space $\mathcal{H}$, if $\mathcal{R}=\text{ran}P+\text{ran}Q$ is closed, then the   $C^*$-algebra $\mathcal{A}(P,Q,I_\mathcal{H})$ generated by $P, Q$ and the identity operator $I_\mathcal{H}$ contains    the
projection  $R$ onto the subspace $\mathcal{R}$.
\end{thm}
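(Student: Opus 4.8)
The plan is to realize $R$ as a continuous function of the single self-adjoint element $T=P+Q$, which manifestly belongs to $\mathcal{A}(P,Q,I_\mathcal{H})$. First I would identify the kernel of $T$: since $P$ and $Q$ are positive, $\langle (P+Q)x,x\rangle=\|Px\|^2+\|Qx\|^2$, so $\ker T=\ker P\cap\ker Q=\mathcal{P}^\perp\cap\mathcal{Q}^\perp$, which in the notation of Theorem 2.1 is exactly $\mathcal{H}_3=\mathcal{R}^\perp$. Consequently $\overline{\operatorname{ran}}\,T=(\ker T)^\perp=\mathcal{R}$ (using that $\mathcal{R}$ is closed by hypothesis), and the projection $R$ onto $\mathcal{R}$ is precisely the projection onto $(\ker T)^\perp$. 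It therefore suffices to produce this projection from $T$ by functional calculus.

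Next I would compute $\sigma(T)$ from the standard model of Theorem 2.1. On the six summands $T$ acts as $I$, $2I$, the block $\left(\begin{smallmatrix} I_{\mathcal{H}'}+S & X\\ X & I_{\mathcal{H}'}-S\end{smallmatrix}\right)$ on the two copies of $\mathcal{H}'$, then $I$ and $0$. A $2\times 2$ computation (trace $2$, determinant $1-s$) shows that on the generic summand the spectrum of $T$ is $\{\,1\pm\sqrt{s}:s\in\sigma(S)\,\}$. By Theorem 2.3 the assumption that $\mathcal{R}$ is closed is exactly the statement $1\notin\sigma(S)$, so $\sigma(S)\subseteq[0,\varepsilon]$ for some $\varepsilon<1$ and hence $1-\sqrt{s}\ge 1-\sqrt{\varepsilon}>0$ there. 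Collecting all contributions gives $\sigma(T)\subseteq\{0\}\cup[\,1-\sqrt{\varepsilon},2\,]$, so that $0$ is either absent from $\sigma(T)$ or an isolated point of it.

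Finally, because $0$ is isolated in $\sigma(T)$, the function $f$ equal to $0$ at $0$ and to $1$ on $[\,1-\sqrt{\varepsilon},2\,]$ is continuous on $\sigma(T)$, so the continuous functional calculus yields $f(T)\in C^*(T)\subseteq\mathcal{A}(P,Q,I_\mathcal{H})$. Since $f$ vanishes on $\ker T$ and is $1$ elsewhere, $f(T)$ is the spectral projection onto $(\ker T)^\perp=\mathcal{R}$, that is $f(T)=R$, whence $R\in\mathcal{A}(P,Q,I_\mathcal{H})$. When $\mathcal{H}_3=0$ one simply has $0\notin\sigma(T)$ and $f(T)=I=R$.

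The only step carrying real content is the spectral gap: the whole argument reduces to showing that closedness of $\mathcal{R}$ forces $0$ to be an isolated point of $\sigma(P+Q)$, and this is precisely where Theorem 2.3 is invoked, together with the positivity of $S$ keeping the eigenvalues $1\pm\sqrt{s}$ bounded away from $0$. Once that gap is established, the passage to $R$ via continuous functional calculus is automatic, and no appeal to the finer Dixmier description of Theorem 2.4 is needed.
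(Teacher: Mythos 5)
Your proof is correct. The strategy is the same as the paper's --- exhibit a positive element of $\mathcal{A}(P,Q,I_\mathcal{H})$ whose kernel is $\mathcal{R}^\perp$ and whose spectrum has $0$ as an isolated point, then recover $R$ as a spectral projection via continuous functional calculus --- but you apply it to a different element. The paper uses $T_0=P+(I-P)Q(I-P)$, which in the six-block model of Theorem 2.1 is diagonal with entries $I, I, I, I_{\mathcal{H}'}-S, I, 0$, so its spectrum is read off immediately and the hypothesis $1\notin\sigma(S)$ from Theorem 2.3 gives the gap at $0$ with no further work. You use $T=P+Q$, which costs you the $2\times 2$ eigenvalue computation $1\pm\sqrt{s}$ on the generic block (your trace/determinant calculation is right, and the identification of the block spectrum as $\{1\pm\sqrt{s}:s\in\sigma(S)\}$ is justified since all entries are functions of $S$), but buys you a cleaner, model-free identification of the kernel via $\langle(P+Q)x,x\rangle=\|Px\|^2+\|Qx\|^2$, hence $\ker T=\mathcal{P}^\perp\cap\mathcal{Q}^\perp=\mathcal{R}^\perp$. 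Your choice also dovetails with what the paper does later: the proof of Theorem 3.4 carries out exactly this computation of $\sigma(P+Q)$ and observes that closedness of $\mathcal{P}+\mathcal{Q}$ makes $0$ isolated there, so your argument in effect unifies Theorem 2.5 with that later discussion of $P\bigvee_e Q$. Both versions rely on Theorem 2.3 in the same way, and both pass through the same functional-calculus step at the end.
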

\begin{proof}Using a direct matrix computation, one sees  that the  operator $
 P+(I-P)Q(I-P)$, which is   in  the $C^*$-algebra $\mathcal{A}(P,Q,I)$, has the form
$$
 P+(I-P)Q(I-P)=
\left(
\begin{array}{cccccc}
 I_{\mathcal{H}_0} & 0 & 0 & 0 & 0 & 0\\
 0 & I_{\mathcal{H}_1} & 0 & 0 & 0 & 0 \\
 0 & 0 & I_{\mathcal{H}'} & 0 & 0 & 0 \\
 0 & 0 & 0 & I_{\mathcal{H}'}-S & 0 & 0 \\
 0 & 0 & 0 & 0 & I_{\mathcal{H}_2} & 0 \\
 0 & 0 & 0 & 0 & 0 & 0_{\mathcal{H}_3} \\
\end{array}
\right).$$
This implies that  $\sigma( P+(I-P)Q(I-P))\subseteq \{0\}\cup [ \varepsilon,1] $ for some $0<\varepsilon<1$. Since $[\varepsilon,1]\cap  \sigma( P+(I-P)Q(I-P))$
is an open and closed subset of the spectrum $\sigma( P+(I-P)Q(I-P))$, it follows from the spectral theorem that the spectral projection ${\mathbf{1}}_{[\varepsilon,1]} (P+(I-P)Q(I-P))$  is in $\mathcal{A}(P,Q,I)$, which leads to the desired result since  $\mathbf{1}_{[\varepsilon,1]} (P+(I-P)Q(I-P)) = {R}$.
\end{proof}
We now relate the representation result to  a question in the context of Arveson's conjecture. We will provide a more precise  statement   in Section 4.
\begin{thm}
Suppose $\sigma:\thinspace C(M)\to \mathcal{L}(\mathcal{H})$ is a $\ast$-homomorphism  for some compact metric space $M$, and $P,Q$ are projections on the Hilbert space $\mathcal{H}$ such that the commutators $[\sigma(\varphi),P]\in \mathcal{K}$ and  $[\sigma(\varphi),Q]\in \mathcal{K}$ for
$\varphi \in C(M)$, where $\mathcal{K}$ denotes the ideal of compact operators on $\mathcal{H}$.  If $\mathcal{R}=\text{ran}P+\text{ran}Q $ is closed and $R$ is the projection onto $\mathcal{R}$, then $[\sigma(\varphi),R]\in \mathcal{K}$ for
$\varphi \in C(M)$.
\end{thm}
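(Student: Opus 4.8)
The plan is to combine the structural result of Theorem 2.5 with the observation that ``essentially commuting with the range of $\sigma$'' is a $C^*$-algebraic condition that is stable under norm limits. The real work of locating $R$ has already been done in Theorem 2.5, so what remains is to package the commutation hypothesis into an algebra to which that theorem applies.

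First I would introduce the set
$$\mathcal{E}=\{T\in\mathcal{L}(\mathcal{H}): [\sigma(\varphi),T]\in\mathcal{K}\ \text{ for all }\ \varphi\in C(M)\}$$
and verify that $\mathcal{E}$ is a unital $C^*$-subalgebra of $\mathcal{L}(\mathcal{H})$. If $\pi$ denotes the quotient map onto the Calkin algebra $\mathcal{L}(\mathcal{H})/\mathcal{K}$, then membership $T\in\mathcal{E}$ is equivalent to saying that $\pi(T)$ commutes with $\pi(\sigma(\varphi))$ for every $\varphi$; thus $\mathcal{E}=\pi^{-1}\bigl(\{\pi(\sigma(\varphi)):\varphi\in C(M)\}'\bigr)$ is the preimage of a relative commutant and is automatically norm-closed. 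It is closed under products because $\mathcal{K}$ is a two-sided ideal and $[\sigma(\varphi),ST]=[\sigma(\varphi),S]T+S[\sigma(\varphi),T]$, and it is $*$-closed because $\sigma(\bar\varphi)=\sigma(\varphi)^*$ together with the fact that $\mathcal{K}$ is a self-adjoint ideal gives $[\sigma(\varphi),S^*]=[\sigma(\bar\varphi),S]^*\in\mathcal{K}$ whenever $S\in\mathcal{E}$.

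Next, the hypotheses say precisely that $P,Q\in\mathcal{E}$, and clearly $I\in\mathcal{E}$, so the $C^*$-algebra generated by them satisfies $\mathcal{A}(P,Q,I)\subseteq\mathcal{E}$. By Theorem 2.5, since $\mathcal{R}$ is closed, the projection $R$ onto $\mathcal{R}$ lies in $\mathcal{A}(P,Q,I)$. Combining these two inclusions yields $R\in\mathcal{E}$, which is exactly the assertion that $[\sigma(\varphi),R]\in\mathcal{K}$ for all $\varphi\in C(M)$. If one prefers to avoid the abstract language, the same conclusion follows by approximation: Theorem 2.5 presents $R$ as a norm-limit of $*$-polynomials $p_n(P,Q)$, the Leibniz rule expresses each $[\sigma(\varphi),p_n(P,Q)]$ as a finite sum of operators every one of which carries a factor $[\sigma(\varphi),P]$ or $[\sigma(\varphi),Q]$ and is therefore compact, and since $[\sigma(\varphi),p_n(P,Q)]\to[\sigma(\varphi),R]$ in norm while $\mathcal{K}$ is norm-closed, the limit is compact.

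I do not expect a substantial obstacle here. The only point needing genuine (but routine) care is the norm-closedness of $\mathcal{K}$, equivalently of $\mathcal{E}$, which is what lets me pass from the dense $*$-algebra of polynomials in $P$ and $Q$ to the limit element $R$; everything else is a direct consequence of Theorem 2.5 and the ideal structure of the compacts.
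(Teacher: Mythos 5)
Your proposal is correct and follows exactly the paper's route: the paper's proof likewise notes that every element of $\mathcal{A}(P,Q,I)$ essentially commutes with the range of $\sigma$ (your algebra $\mathcal{E}$ makes this ``elementary $C^*$-algebra argument'' explicit) and then invokes Theorem 2.5 to place $R$ in that algebra. Your write-up simply supplies the details the paper leaves to the reader.
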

 \begin{proof}
 Using an elementary $C^*$-algebra argument, one     shows that $[\sigma(\varphi),T]\in \mathcal{K}$ for any operator $T\in \mathcal{A}(P,Q,I)$. Combining this fact with Theorem 2.5, one obtains the desired result.
 \end{proof}
\begin{cor}
With the same hypotheses, the projection $\widetilde{R}$ onto $\mathcal{P}\cap \mathcal{Q}$ essentially commutes with the range of $\sigma$.
\end{cor}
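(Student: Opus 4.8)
The plan is to reduce the statement to the single assertion that $\widetilde{R}\in\mathcal{A}(P,Q,I)$, after which the elementary $C^*$-algebra argument used in the proof of Theorem 2.6 applies verbatim. Concretely, the set $\mathcal{B}=\{T\in\mathcal{L}(\mathcal{H}):[\sigma(\varphi),T]\in\mathcal{K}\text{ for all }\varphi\in C(M)\}$ is the inverse image, under the quotient map $\pi:\mathcal{L}(\mathcal{H})\to\mathcal{L}(\mathcal{H})/\mathcal{K}$, of the commutant of $\pi(\sigma(C(M)))$ in the Calkin algebra; being the preimage of a $C^*$-subalgebra under a $*$-homomorphism, $\mathcal{B}$ is itself a $C^*$-subalgebra of $\mathcal{L}(\mathcal{H})$, and it contains $P$, $Q$, and $I$ by hypothesis. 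Hence $\mathcal{A}(P,Q,I)\subseteq\mathcal{B}$, so it suffices to place $\widetilde{R}$ inside $\mathcal{A}(P,Q,I)$.

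First I would compute $\mathcal{P}\cap\mathcal{Q}$ directly in the standard model of Theorem 2.1, using the explicit descriptions of $\mathcal{P}$ and $\mathcal{Q}$ recorded above. A vector lies in both exactly when its $\mathcal{H}_0$, $\mathcal{H}_2$, and $\mathcal{H}_3$ components vanish and its two $\mathcal{H}'$-components $(u,w)$ satisfy $w=Xa+(I_{\mathcal{H}'}-S)b=0$ and $u=Sa+Xb$ for some $a,b\in\mathcal{H}'$. Writing $X=\sqrt{S}\sqrt{I_{\mathcal{H}'}-S}$ and using that $1$ is not in the point spectrum of $S$ (so $\sqrt{I_{\mathcal{H}'}-S}$ is injective), the relation $w=0$ forces $\sqrt{S}\,a+\sqrt{I_{\mathcal{H}'}-S}\,b=0$, whence $u=\sqrt{S}\,(\sqrt{S}\,a+\sqrt{I_{\mathcal{H}'}-S}\,b)=0$ as well. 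Consequently $\mathcal{P}\cap\mathcal{Q}=\mathcal{H}_1$ and $\widetilde{R}=\operatorname{diag}(0,I_{\mathcal{H}_1},0,0,0,0)$.

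Next I would exhibit $\widetilde{R}$ as a continuous function of the self-adjoint operator $PQP\in\mathcal{A}(P,Q,I)$. A block computation gives $PQP=\operatorname{diag}(0_{\mathcal{H}_0},I_{\mathcal{H}_1},S,0_{\mathcal{H}'},0_{\mathcal{H}_2},0_{\mathcal{H}_3})$, so that $\sigma(PQP)=\{0\}\cup\sigma(S)\cup\{1\}$ and $\widetilde{R}$ is the spectral projection of $PQP$ associated with the value $1$. Here the hypothesis that $\mathcal{R}=\mathcal{P}+\mathcal{Q}$ is closed is essential: by Theorem 2.3 it is equivalent to $1\notin\sigma(S)$, so $\sigma(S)$ is a compact subset of $[0,1)$ and $1$ is an isolated point of $\sigma(PQP)$. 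Choosing a continuous $f$ on $\sigma(PQP)$ with $f\equiv 1$ near $1$ and $f\equiv 0$ on $\{0\}\cup\sigma(S)$, the continuous functional calculus yields $f(PQP)=\widetilde{R}$, so $\widetilde{R}\in\mathcal{A}(P,Q,I)\subseteq\mathcal{B}$, which is the desired conclusion.

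The step I expect to be the crux is precisely the passage from ``$\widetilde{R}$ is the eigenprojection of $PQP$ at $1$'' to ``$\widetilde{R}\in\mathcal{A}(P,Q,I)$'': the indicator $\mathbf{1}_{\{1\}}$ is continuous on $\sigma(PQP)$ only when $1$ is isolated, and this isolation is exactly what the closedness of $\mathcal{R}$ secures through Theorem 2.3. Without it, $\widetilde{R}$ would still be a well-defined orthogonal projection but need not belong to the norm-closed algebra generated by $P$ and $Q$. As an alternative route I would use the duality $\mathcal{P}\cap\mathcal{Q}=(\mathcal{P}^\perp+\mathcal{Q}^\perp)^\perp$: applying Theorem 2.6 to the projections $I-P$ and $I-Q$, whose commutators with each $\sigma(\varphi)$ are again compact, shows that the projection onto $\mathcal{P}^\perp+\mathcal{Q}^\perp$ essentially commutes with the range of $\sigma$, and hence so does its complement $\widetilde{R}$; here one must separately check that $\mathcal{P}^\perp+\mathcal{Q}^\perp$ is closed, which again reduces to the condition $1\notin\sigma(S)$ for the pair $(I-P,I-Q)$ and therefore follows from the closedness of $\mathcal{R}$.
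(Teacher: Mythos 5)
Your proof is correct, but it takes a genuinely different route from the paper's. The paper disposes of the corollary in one line by citing Theorem 1 of Douglas, \emph{Essentially reductive Hilbert modules}, applied to the short exact sequence $0\to\mathcal{P}\cap\mathcal{Q}\to\mathcal{P}\oplus\mathcal{Q}\to\mathcal{R}\to 0$ with $i(r)=(r,-r)$ and $j(p,q)=p+q$; the closedness of $\mathcal{R}$ is what makes $j$ surjective onto a closed subspace, and the cited theorem transfers essential normality from $\mathcal{P}\oplus\mathcal{Q}$ and $\mathcal{R}$ to the kernel. You instead stay inside the two-projection model: your identification $\mathcal{P}\cap\mathcal{Q}=\mathcal{H}_1$ is right (the injectivity of $\sqrt{I_{\mathcal{H}'}-S}$ does force $u=w=0$), the computation $PQP=\operatorname{diag}(0,I_{\mathcal{H}_1},S,0,0,0)$ is right, and Theorem 2.3 gives exactly the spectral gap at $1$ needed to make $\mathbf{1}_{\{1\}}$ continuous on $\sigma(PQP)$, so $\widetilde{R}=f(PQP)\in\mathcal{A}(P,Q,I)$. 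What your argument buys is a self-contained proof that does not lean on the external reference, plus the strictly stronger conclusion that $\widetilde{R}$ lies in the $C^*$-algebra generated by $P$, $Q$, $I$ (a companion to Theorem 2.5 for the intersection rather than the span), which in particular means the $p$-essential normality refinement of Remark 2.9 applies to $\widetilde{R}$ as well. Your alternative route via $\mathcal{P}\cap\mathcal{Q}=(\mathcal{P}^\perp+\mathcal{Q}^\perp)^\perp$ also checks out, since the model operator for the pair $(I-P,I-Q)$ is again $S$, so closedness of $\mathcal{P}^\perp+\mathcal{Q}^\perp$ is the same condition $1\notin\sigma(S)$. The only cosmetic caveat is that $\sigma(PQP)$ equals $\{0\}\cup\sigma(S)\cup\{1\}$ only when the relevant summands are nonzero; in the degenerate cases $\widetilde{R}$ is $0$ or the argument simplifies, so nothing is lost.
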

\begin{proof}This is an  immediate consequence  of Theorem 1 in \cite{Dou1} and the exact sequence
$$0\to\widetilde{R} \stackrel{i}{\to}  \mathcal{P}\oplus {\mathcal{Q}}\stackrel{j}{ \to} \mathcal{R}\to 0,$$ where $i(r)=(r,-r)$, and $j(p,q)=p+q$ for $r\in \widetilde{R},p\in \mathcal{P},q\in\mathcal{Q}$.
\end{proof}

\begin{rem} In both the theorem and corollary,  $C(M)$ can be replaced by any $C$*-subalgebra of $\mathscr{L}(\mathcal{H})$.\end{rem}


\begin{rem}  These results are related to a theorem of Arveson \cite[Theorem 4.4]{Ar5} and the more recent work of Kennedy \cite{Ke} in which essential normality is replaced by $p$-essential normality, where the commutators are assumed to be in the Schatten $p-$class for $1\leq p< \infty$.  If one examines the proof of Theorem 2.5 more closely,    the preceding
arguments can be refined to obtain analogous results for $p$-essential normality. Basically, this is true because the functional calculus which yields the spectral projection  $\mathbf{1}_{[\varepsilon,1]} (P+(I-P)Q(I-P))$ can be approximated on a neighborhood of  the spectrum with analytic functions. \end{rem}

We can extend these results somewhat using the following reduction which is essentially algebraic.
\begin{thm}
Suppose $\mathcal{P}$ and $\mathcal{Q}$ are subspaces of the Hilbert space $\mathcal{H}$ and $\mathcal{R}^\sharp$ is a subspace of $\mathcal{P}\cap\mathcal{Q}$. Then $\mathcal{P}+\mathcal{Q}$ is closed if and only if $\mathcal{P}/  {\mathcal{R}^\sharp}+ \mathcal{Q}/  {\mathcal{R}^\sharp}$ is closed in $\mathcal{H}/  {\mathcal{R}^\sharp}$.
\end{thm}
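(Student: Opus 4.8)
The plan is to reduce the statement to the elementary fact that, for a closed subspace $\mathcal{N}$ of $\mathcal{H}$ with quotient map $\pi:\mathcal{H}\to\mathcal{H}/\mathcal{N}$, a subspace containing $\mathcal{N}$ is closed exactly when its image under $\pi$ is closed. First I would set $\mathcal{N}=\mathcal{R}^\sharp$ and let $\pi:\mathcal{H}\to\mathcal{H}/\mathcal{R}^\sharp$ be the canonical quotient map, which (since $\mathcal{R}^\sharp$ is closed) is a bounded, open surjection of Hilbert spaces; concretely one may identify $\mathcal{H}/\mathcal{R}^\sharp$ with $(\mathcal{R}^\sharp)^\perp$ via the orthogonal projection. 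Because $\mathcal{R}^\sharp\subseteq\mathcal{P}\cap\mathcal{Q}$, the map $\pi$ carries $\mathcal{P}$ onto $\mathcal{P}/\mathcal{R}^\sharp$ and $\mathcal{Q}$ onto $\mathcal{Q}/\mathcal{R}^\sharp$, and by linearity $\pi(\mathcal{P}+\mathcal{Q})=\mathcal{P}/\mathcal{R}^\sharp+\mathcal{Q}/\mathcal{R}^\sharp$.

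The heart of the argument is a saturation computation. Writing $\mathcal{M}=\mathcal{P}+\mathcal{Q}$, I would note that $\mathcal{R}^\sharp\subseteq\mathcal{P}\subseteq\mathcal{M}$, so $\mathcal{M}$ is $\pi$-saturated in the sense that $\pi^{-1}(\pi(\mathcal{M}))=\mathcal{M}+\mathcal{R}^\sharp=\mathcal{M}$. By the defining property of the quotient topology --- equivalently, because $\pi$ is continuous and open --- a subset $S$ of $\mathcal{H}/\mathcal{R}^\sharp$ is closed if and only if $\pi^{-1}(S)$ is closed in $\mathcal{H}$. Taking $S=\pi(\mathcal{M})=\mathcal{P}/\mathcal{R}^\sharp+\mathcal{Q}/\mathcal{R}^\sharp$ and using the saturation identity $\pi^{-1}(\pi(\mathcal{M}))=\mathcal{M}$, this says precisely that $\mathcal{P}/\mathcal{R}^\sharp+\mathcal{Q}/\mathcal{R}^\sharp$ is closed if and only if $\mathcal{P}+\mathcal{Q}$ is closed, which is the assertion.

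If one prefers a coordinate description, the same conclusion follows from the orthogonal decompositions $\mathcal{P}=\mathcal{R}^\sharp\oplus\mathcal{P}_0$ and $\mathcal{Q}=\mathcal{R}^\sharp\oplus\mathcal{Q}_0$ with $\mathcal{P}_0,\mathcal{Q}_0\subseteq(\mathcal{R}^\sharp)^\perp$, which give $\mathcal{P}+\mathcal{Q}=\mathcal{R}^\sharp\oplus(\mathcal{P}_0+\mathcal{Q}_0)$ as an orthogonal sum; a sum of a closed subspace with a subspace of its orthogonal complement is closed exactly when that second subspace is closed in $(\mathcal{R}^\sharp)^\perp$, and $\mathcal{P}_0+\mathcal{Q}_0$ is the image $\mathcal{P}/\mathcal{R}^\sharp+\mathcal{Q}/\mathcal{R}^\sharp$ under the identification $\mathcal{H}/\mathcal{R}^\sharp\cong(\mathcal{R}^\sharp)^\perp$. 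There is no analytic obstacle here: the only points requiring care are the bookkeeping behind the saturation identity $\pi^{-1}(\pi(\mathcal{M}))=\mathcal{M}$, which is exactly where the hypothesis $\mathcal{R}^\sharp\subseteq\mathcal{P}\cap\mathcal{Q}$ enters, and the tacit assumption that $\mathcal{R}^\sharp$ is closed so that $\mathcal{H}/\mathcal{R}^\sharp$ is a Hilbert space and ``closed'' has its usual meaning.
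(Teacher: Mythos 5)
Your argument is correct and is essentially the paper's own proof: the authors likewise reduce the claim to the identity $\mathcal{P}/\mathcal{R}^\sharp+\mathcal{Q}/\mathcal{R}^\sharp=(\mathcal{P}+\mathcal{Q})/\mathcal{R}^\sharp$ together with the fact that a linear manifold containing $\mathcal{R}^\sharp$ is closed if and only if its image in $\mathcal{H}/\mathcal{R}^\sharp$ is closed, which is exactly your saturation argument. Your remarks on the tacit closedness of $\mathcal{R}^\sharp$ and the orthogonal-complement picture are sound but add nothing beyond the paper's route.
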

 \begin{proof} It follows from  the fact that $ \mathcal{P}/  {\mathcal{R}^\sharp}+ \mathcal{Q}/  {\mathcal{R}^\sharp}= (\mathcal{P}+ \mathcal{Q})/  {\mathcal{R}^\sharp} $
 and   the fact that  for any linear manifold $\mathcal{L}$ containing $\mathcal{R}^\sharp$, $\mathcal{L}/ \mathcal{R}^\sharp$ is closed if and only if $\mathcal{L}$ is closed.
 \end{proof}
 \begin{cor} With the same hypotheses, the closeness of $\mathcal{P}/  {\mathcal{R}^\sharp}+\mathcal{Q}/  {\mathcal{R}^\sharp}$ is equivalent to the closeness of $\mathcal{P}/  (\mathcal{P}\cap \mathcal{Q})+\mathcal{Q}/  (\mathcal{P}\cap \mathcal{Q}) $.
 \end{cor}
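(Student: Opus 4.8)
The plan is to observe that both closeness conditions appearing in the statement are, via Theorem 2.11, equivalent to a single common condition, namely the closeness of $\mathcal{P}+\mathcal{Q}$ in $\mathcal{H}$; the corollary then drops out by transitivity of ``if and only if''. The key point to recognize is that the hypothesis of Theorem 2.11 admits two legitimate choices for the distinguished subspace lying inside $\mathcal{P}\cap\mathcal{Q}$: the given $\mathcal{R}^\sharp$, and the intersection $\mathcal{P}\cap\mathcal{Q}$ itself. The latter is trivially a subspace of $\mathcal{P}\cap\mathcal{Q}$ (and is closed whenever $\mathcal{P}$ and $\mathcal{Q}$ are), so it is an admissible input to the theorem.

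First I would apply Theorem 2.11 with the subspace $\mathcal{R}^\sharp$, which yields that $\mathcal{P}/\mathcal{R}^\sharp+\mathcal{Q}/\mathcal{R}^\sharp$ is closed in $\mathcal{H}/\mathcal{R}^\sharp$ if and only if $\mathcal{P}+\mathcal{Q}$ is closed in $\mathcal{H}$. Next I would apply the same theorem with $\mathcal{P}\cap\mathcal{Q}$ substituted for $\mathcal{R}^\sharp$, obtaining that $\mathcal{P}/(\mathcal{P}\cap\mathcal{Q})+\mathcal{Q}/(\mathcal{P}\cap\mathcal{Q})$ is closed in $\mathcal{H}/(\mathcal{P}\cap\mathcal{Q})$ if and only if $\mathcal{P}+\mathcal{Q}$ is closed. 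Chaining these two biconditionals through the common statement that $\mathcal{P}+\mathcal{Q}$ is closed gives precisely the asserted equivalence between the closeness of $\mathcal{P}/\mathcal{R}^\sharp+\mathcal{Q}/\mathcal{R}^\sharp$ and that of $\mathcal{P}/(\mathcal{P}\cap\mathcal{Q})+\mathcal{Q}/(\mathcal{P}\cap\mathcal{Q})$.

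I do not expect any genuine obstacle, since the corollary is a purely formal consequence of Theorem 2.11 and carries no additional analytic content. The only step meriting a moment's attention is confirming that $\mathcal{P}\cap\mathcal{Q}$ qualifies as a valid choice of $\mathcal{R}^\sharp$, which is immediate from the reflexivity of inclusion; everything else is bookkeeping.
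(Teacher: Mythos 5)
Your proposal is correct and coincides with the paper's own argument: both closeness statements are shown to be equivalent to the closeness of $\mathcal{P}+\mathcal{Q}$ in $\mathcal{H}$ by applying the preceding theorem once with $\mathcal{R}^\sharp$ and once with $\mathcal{P}\cap\mathcal{Q}$ as the distinguished subspace. Nothing further is needed.
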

 \begin{proof}
Both of these statements are equivalent to  $\mathcal{P}+\mathcal{Q}$ being closed in $\mathcal{H}$.
 \end{proof}
\section{Essential Span of Subspaces}\label{sec3}

The following question and corresponding result are important for considering the notion of essential span in this section.
\begin{ques} When do two projections $P$ and $Q$ on a Hilbert space $\mathcal{H}$ almost commute; that is, when is $[P,Q]\in \mathcal{K}(\mathcal{H})?$\end{ques}
 Using   the representation theorem for $P$ and $Q$ above, we see that $[P,Q]\in \mathcal{K}$ if and only if $X\in \mathcal{K}$ and so we have the following result.

\begin{thm}
 For projections $P$ and $Q$ onto subspaces $\mathcal{P}$ and $\mathcal{Q}$, respectively, on a Hilbert space $\mathcal{H}, \thinspace [P,Q]\in \mathcal{K}$ if and only if ${\sigma_e}(S)\subset \{0,1\}$. Moreover, $PQ\in\mathcal{K}$ if and only if $S\in\mathcal{K}$ and $\dim \mathcal{P}\cap \mathcal{Q} < \infty$  in the representation appearing in Theorem 2.1.
\end{thm}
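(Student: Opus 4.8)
The plan is to read off both equivalences from the block form of $P$ and $Q$ in Theorem 2.1 and then convert the resulting operator conditions into spectral ones. For the first assertion I may use the observation recorded just above the statement, that $[P,Q]\in\mathcal{K}$ if and only if $X\in\mathcal{K}$; indeed, multiplying the two block matrices shows that every block of $PQ-QP$ vanishes except the entries $\pm X$ coupling the two copies of $\mathcal{H}'$. It therefore remains to prove that $X\in\mathcal{K}$ is equivalent to $\sigma_e(S)\subseteq\{0,1\}$.

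First I would use that $X=\sqrt{S(I_{\mathcal{H}'}-S)}$ is positive, so $X\in\mathcal{K}$ if and only if $X^2=S(I_{\mathcal{H}'}-S)\in\mathcal{K}$: one implication is immediate, and for the converse the continuous functional calculus applied to the positive compact operator $X^2$ through the function $t\mapsto\sqrt{t}$, which vanishes at $0$, recovers $X$ as a compact operator. Next, writing $f(t)=t(1-t)$, a continuous function on $[0,1]$ whose only zeros there are $t=0$ and $t=1$, I would invoke the spectral mapping theorem for the essential spectrum of the self-adjoint contraction $S$, namely $\sigma_e\big(f(S)\big)=f\big(\sigma_e(S)\big)$. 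Since a self-adjoint operator is compact exactly when its essential spectrum is contained in $\{0\}$, the operator $f(S)=S(I_{\mathcal{H}'}-S)$ lies in $\mathcal{K}$ if and only if $f(\sigma_e(S))\subseteq\{0\}$, that is, if and only if $\sigma_e(S)\subseteq f^{-1}(0)\cap[0,1]=\{0,1\}$. This establishes the first equivalence.

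For the claim about $PQ$ I would again compute the product from Theorem 2.1: every block of $PQ$ vanishes except the identity $I_{\mathcal{H}_1}$ on the summand $\mathcal{H}_1$ and, on the first copy of $\mathcal{H}'$, the entries $S$ and $X$. Compressing $PQ$ to $\mathcal{H}_1$ returns $I_{\mathcal{H}_1}$ and compressing to the first copy of $\mathcal{H}'$ returns $S$, so if $PQ\in\mathcal{K}$ then both $I_{\mathcal{H}_1}$ and $S$ are compact. Conversely, if $S\in\mathcal{K}$ then $X=\sqrt{S(I_{\mathcal{H}'}-S)}\in\mathcal{K}$ by the square-root argument of the previous paragraph, and if moreover $I_{\mathcal{H}_1}\in\mathcal{K}$ then all nonzero blocks of $PQ$ are compact, so $PQ\in\mathcal{K}$. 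The only step requiring care is the identification $\mathcal{H}_1=\mathcal{P}\cap\mathcal{Q}$: a model vector is fixed by both $P$ and $Q$ exactly when its components in $\mathcal{H}_0,\mathcal{H}_2,\mathcal{H}_3$ and the second copy of $\mathcal{H}'$ vanish and its first $\mathcal{H}'$-component $x'$ satisfies $Sx'=x'$; since $1$ is not in the point spectrum of $S$ by Theorem 2.1, this forces $x'=0$, so $\mathcal{P}\cap\mathcal{Q}=\mathcal{H}_1$. Consequently $I_{\mathcal{H}_1}\in\mathcal{K}$ is the same as $\dim(\mathcal{P}\cap\mathcal{Q})<\infty$, and combining this with the compactness of $S$ gives the stated characterization.

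I do not expect a deep obstacle here; the work is bookkeeping, and the one thing to be careful about is that the spectral mapping theorem $\sigma_e(f(S))=f(\sigma_e(S))$ and the criterion \emph{compact if and only if essential spectrum in} $\{0\}$ are applied only to self-adjoint operators, where both are valid. The single fact that streamlines the whole argument is that, for a positive operator $T$, compactness of $T$, compactness of $T^2$, and $\sigma_e(T)\subseteq\{0\}$ are mutually equivalent.
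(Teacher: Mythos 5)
Your proposal is correct and follows essentially the same route as the paper: a block-matrix computation in the two-projection model of Theorem 2.1 showing $[P,Q]\in\mathcal{K}$ iff $X\in\mathcal{K}$ and $PQ\in\mathcal{K}$ iff $S$ and $I_{\mathcal{H}_1}$ are compact, followed by spectral theory for the positive contraction $S$. You simply supply the details (the square-root/functional-calculus step, the essential spectral mapping for $f(t)=t(1-t)$, and the identification $\mathcal{H}_1=\mathcal{P}\cap\mathcal{Q}$ via the absence of $1$ in the point spectrum of $S$) that the paper's one-line proof leaves implicit.
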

\begin{proof}
The proof  follows  from a matrix calculation in the above representation theorem  which shows that  $[P,Q]\in\mathcal{K}$ if and only if $X=\sqrt{S(I_{\mathcal{H}'}-S)}$ is compact. For $PQ\in\mathcal{K}$, it is necessary and sufficient for $S$ and $I_{\mathcal{H}_1}$ to be compact.
\end{proof}

If $P$ and $Q$ are projections on the Hilbert space $\mathcal{H}$, then another notion of the span of the ranges of $P$ and $Q$ is relevant when considering questions of essential normality, which involves
the images of $P$ and $Q$ in the Calkin  algebra.  If $0$ is an isolated point in the essential spectrum, ${\sigma_e}(P+Q)$, of $P+Q$, or $0\notin {\sigma_e}(P+Q)$, then the image in the Calkin algebra of the  spectral projection,  ${P} \bigvee_e {Q}$, for
$[\varepsilon, \infty]$, where $(0, \varepsilon)\cap {\sigma_e}(P+Q)=\phi$, can be thought of as the "essential span" of $\text{ran}P$ and $\text{ran}Q$. (Note that the image of this spectral projection in the Calkin algebra does not depend on $\varepsilon$ whenever $(0, \varepsilon)\cap {\sigma_e}(P+Q)=\phi$.)  One result related to this notion  is
the following.

\begin{thm}
 If $[P,Q]\in\mathcal{K}$, then $0$ is isolated in ${\sigma_e}(P+Q)$.  Moreover, if $P$ and $Q$ almost commute with a $C^*$-algebra $ \mathfrak{A}$, then so does any   projection on $\mathcal{H}$ with the image ${P} \bigvee_e {Q}$ in the Calkin algebra.
\begin{proof}
 Considering the standard model for two projection in Section 2, one sees that $[P,Q]\in\mathcal{K}$ implies that $X$ is compact and  ${\sigma_e}(P+Q)\subseteq \{0,1,2\}$.  This implies that $[\varepsilon,2]$ is an open and closed subset of ${\sigma }(P+Q)$ for some $0<\varepsilon<1 $ and hence $\mathbf{1}_{[\varepsilon,2]} (P+ Q )$  is in $\mathcal{A}(P,Q,I)$, where $\mathcal{A}(P,Q,I)$ is the $C^*$-algebra generated by $P,Q$ and the identity operator $I$. Therefore,   its image in the Calkin algebra,   ${P} \bigvee_e {Q}$, commutes with the image of $ \mathfrak{A}$, which completes the proof.
\end{proof}
\end{thm}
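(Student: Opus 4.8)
The plan is to route both assertions through the standard model for two projections in Theorem 2.1, reducing everything to a spectral picture for $P+Q$ together with the elementary essential-commutant argument already used for Theorem 2.6. Throughout I write $\pi$ for the quotient map onto the Calkin algebra.

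First I would compute $P+Q$ in the six-block decomposition $\mathcal{H}=\mathcal{H}_0\oplus\mathcal{H}_1\oplus\mathcal{H}'\oplus\mathcal{H}'\oplus\mathcal{H}_2\oplus\mathcal{H}_3$ of Theorem 2.1. On $\mathcal{H}_0,\mathcal{H}_1,\mathcal{H}_2,\mathcal{H}_3$ the sum acts as $I,2I,I,0$, contributing only $\{0,1,2\}$ to the spectrum, while on the middle block
$$P+Q=\begin{pmatrix} I_{\mathcal{H}'}+S & X \\ X & I_{\mathcal{H}'}-S\end{pmatrix}.$$
By Theorem 3.2 the hypothesis $[P,Q]\in\mathcal{K}$ is equivalent to $X=\sqrt{S(I_{\mathcal{H}'}-S)}\in\mathcal{K}$, hence to $\sigma_e(S)\subseteq\{0,1\}$. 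Killing the off-diagonal in the Calkin algebra, $\pi(P+Q)$ becomes the diagonal $\pi(I_{\mathcal{H}'}+S)\oplus\pi(I_{\mathcal{H}'}-S)$ on this block, with spectrum in $(1+\{0,1\})\cup(1-\{0,1\})$. Collecting all blocks gives $\sigma_e(P+Q)\subseteq\{0,1,2\}$; being a finite set, $0$ is automatically isolated in it, which is the first assertion.

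For the second assertion I would exhibit a projection lying in $\mathcal{A}(P,Q,I)$ whose Calkin image is $P\bigvee_e Q$. Since the middle block contributes eigenvalues $1\pm\sqrt{\lambda}$, $\lambda\in\sigma(S)$, and $\sigma_e(S)\subseteq\{0,1\}$, the spectrum $\sigma(P+Q)$ can accumulate only at $\{0,1,2\}$; in particular there is $\varepsilon\in(0,1)$ with an honest spectral gap $(\varepsilon-\delta,\varepsilon+\delta)\cap\sigma(P+Q)=\emptyset$, so that $[\varepsilon,2]\cap\sigma(P+Q)$ is relatively clopen. Continuous functional calculus then places $\mathbf{1}_{[\varepsilon,2]}(P+Q)$ in $C^*(P+Q)\subseteq\mathcal{A}(P,Q,I)$, and by construction $\pi(\mathbf{1}_{[\varepsilon,2]}(P+Q))=P\bigvee_e Q$. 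The essential commutant $\{T:[T,A]\in\mathcal{K}\text{ for all }A\in\mathfrak{A}\}$ is a $C^*$-algebra containing $P,Q,I$, hence contains $\mathcal{A}(P,Q,I)$; this is the argument of Theorem 2.6, and it yields $[\mathbf{1}_{[\varepsilon,2]}(P+Q),A]\in\mathcal{K}$, i.e.\ $P\bigvee_e Q$ commutes with $\pi(\mathfrak{A})$. Finally, any projection $R'$ with $\pi(R')=P\bigvee_e Q$ differs from $\mathbf{1}_{[\varepsilon,2]}(P+Q)$ by a compact operator, so $[R',A]\in\mathcal{K}$ as well.

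The step I expect to be the main obstacle is the clopen-ness: in contrast to Theorem 2.5, here $0$ is only guaranteed isolated in $\sigma_e(P+Q)$, not in $\sigma(P+Q)$, so $P+Q$ may have infinitely many eigenvalues in $(0,\varepsilon)$ accumulating at $0$. The delicate point is therefore to verify that the accumulation set of $\sigma(P+Q)$ is confined to $\{0,1,2\}$ — which is what produces a genuine gap at some $\varepsilon\in(0,1)$ and makes $\mathbf{1}_{[\varepsilon,2]}(P+Q)$ a continuous (not merely Borel) function of $P+Q$, hence a member of $\mathcal{A}(P,Q,I)$. A related bookkeeping point, implicit in the definition of $P\bigvee_e Q$, is that its Calkin image is independent of the admissible $\varepsilon$: two choices differ by the spectral projection for a closed subinterval of $(0,1)$ disjoint from $\sigma_e(P+Q)$, which is finite rank.
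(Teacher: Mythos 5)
Your proof is correct and follows essentially the same route as the paper: pass to the standard model of Theorem 2.1, observe that $[P,Q]\in\mathcal{K}$ forces $X$ compact and hence ${\sigma_e}(P+Q)\subseteq\{0,1,2\}$, and then realize $P\bigvee_e Q$ as the Calkin image of a spectral projection lying in $\mathcal{A}(P,Q,I)$, which sits inside the essential commutant of $\mathfrak{A}$. The one place you go beyond the paper is a genuine improvement rather than a divergence: the paper simply asserts that $[\varepsilon,2]\cap\sigma(P+Q)$ is clopen for some $\varepsilon\in(0,1)$, whereas you justify it via the eigenvalue formula $1\pm\sqrt{\lambda}$, $\lambda\in\sigma(S)$, showing that $\sigma(P+Q)$ (not merely $\sigma_e(P+Q)$) can accumulate only at $\{0,1,2\}$.
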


One thing one needs to be clear on is that the image of  ${P} \bigvee {Q}$ in the Calkin algebra and ${P} \bigvee_e {Q}$  are not necessarily the same.  Consider, for example,  the subspaces  $\mathcal{P}=\overline{span}\{ e_n \oplus 0: n \in \mathbb{N}\}$ in
$\ell^2 \oplus \ell^2$ and $\mathcal{Q}=\overline{span}\{ e_n \oplus \frac{1}{n}e_n: n \in \mathbb{N}\}$.  These subspaces have the images of $\pi(P \bigvee Q)$ and $P \bigvee_e Q$ in the Calkin algebra which are the images of the projections onto
$\ell^2 \oplus \ell^2$ and $\ell^2 \oplus (0)$, respectively.  Note that in  this  case  $\mathcal{P}+\mathcal{Q}$ is not closed, which is the key as the following result shows.

\begin{thm}
 Let $P,Q$ be the projections onto the  subspaces $\mathcal{P}$ and $\mathcal{Q}$ of a Hilbert space  $\mathcal{H}$, respectively. Then  $\mathcal{P}+\mathcal{Q}$ is closed if and only if  $\pi( {P}\bigvee  {Q})=  {P}\bigvee_e  {Q} $.\end{thm}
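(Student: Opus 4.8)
The plan is to carry out the entire argument inside the standard model of Theorem 2.1, writing $\mathcal{H}=\mathcal{H}_0\oplus\mathcal{H}_1\oplus\mathcal{H}'\oplus\mathcal{H}'\oplus\mathcal{H}_2\oplus\mathcal{H}_3$ with $P,Q$ in their displayed block forms and $X=\sqrt{S(I_{\mathcal{H}'}-S)}$. First I would pin down $P\bigvee Q$, the projection onto $\overline{\mathcal{P}+\mathcal{Q}}$. From the computation of $\mathcal{P}+\mathcal{Q}$ preceding Theorem 2.3, its second $\mathcal{H}'$-component sweeps out $\mathrm{ran}\,X+\mathrm{ran}(I_{\mathcal{H}'}-S)=\mathrm{ran}(I_{\mathcal{H}'}-S)^{1/2}$, whose closure is $\overline{\mathrm{ran}(I_{\mathcal{H}'}-S)}=(\ker(I_{\mathcal{H}'}-S))^{\perp}=\mathcal{H}'$ because $1\notin\sigma_p(S)$. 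Hence, independently of closedness, $P\bigvee Q=\mathrm{diag}(I_{\mathcal{H}_0},I_{\mathcal{H}_1},I_{\mathcal{H}'},I_{\mathcal{H}'},I_{\mathcal{H}_2},0_{\mathcal{H}_3})$.

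Next I would compute $P+Q$ in this model: it equals $I$ on $\mathcal{H}_0,\mathcal{H}_2$, equals $2I$ on $\mathcal{H}_1$, equals $0$ on $\mathcal{H}_3$, and on the middle summand $\mathcal{H}'\oplus\mathcal{H}'$ it is the block $T=\left(\begin{smallmatrix} I_{\mathcal{H}'}+S & X \\ X & I_{\mathcal{H}'}-S \end{smallmatrix}\right)$. A direct eigenvalue computation, or the observation that $(T-I)^2=S\oplus S$, shows that over a spectral value $s$ of $S$ the block carries the two eigenvalues $1\pm\sqrt{s}$, so $\sigma(T)\subseteq[0,2]$, the lower branch $1-\sqrt{s}$ alone meets $[0,\varepsilon)$ for small $\varepsilon>0$, and $\dim\mathbf{1}_{[0,\varepsilon)}(T)=\dim\mathbf{1}_{((1-\varepsilon)^2,1]}(S)$.

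Now compare the two projections. Off the middle block $\mathbf{1}_{[\varepsilon,\infty)}(P+Q)$ agrees with $P\bigvee Q$ (both are $I$ on $\mathcal{H}_0,\mathcal{H}_1,\mathcal{H}_2$ and $0$ on $\mathcal{H}_3$), so the difference $(P\bigvee Q)-\mathbf{1}_{[\varepsilon,\infty)}(P+Q)$ is exactly $\mathbf{1}_{[0,\varepsilon)}(T)$ supported on the middle block. By the dimension count above together with the standard characterization $1\in\sigma_e(S)\iff\dim\mathbf{1}_{(1-\delta,1]}(S)=\infty$ for all $\delta>0$, this difference is compact if and only if $1\notin\sigma_e(S)$. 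Since $1\notin\sigma_p(S)$ gives $1\notin\sigma_e(S)\iff 1\notin\sigma(S)$, Theorem 2.3 makes this equivalent to $\mathcal{P}+\mathcal{Q}$ being closed. Thus $\pi(P\bigvee Q)=\pi(\mathbf{1}_{[\varepsilon,\infty)}(P+Q))$ precisely when $\mathcal{P}+\mathcal{Q}$ is closed.

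Finally I would reconcile this with the definition of $P\bigvee_e Q$. When $\mathcal{P}+\mathcal{Q}$ is closed, $1\notin\sigma(S)$ forces $\sigma(P+Q)\subseteq\{0\}\cup[c,2]$ for some $c>0$, so $0$ is isolated in $\sigma_e(P+Q)$, $P\bigvee_e Q=\pi(\mathbf{1}_{[\varepsilon,\infty)}(P+Q))$ is well defined, and it equals $\pi(P\bigvee Q)$. If $\mathcal{P}+\mathcal{Q}$ is not closed, then $1\in\sigma_e(S)$ and $\mathbf{1}_{[0,\varepsilon)}(T)$ is an infinite-rank projection; consequently either $P\bigvee_e Q$ is not defined (when $0$ is not isolated in $\sigma_e(P+Q)$), or it is defined and $\pi(P\bigvee Q)-P\bigvee_e Q=\pi(\mathbf{1}_{[0,\varepsilon)}(T))$ is a nonzero projection in the Calkin algebra, so equality fails either way. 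I expect this last bookkeeping to be the main obstacle: one must keep track of the fact that $P\bigvee_e Q$ is only meaningful once $0$ is isolated in $\sigma_e(P+Q)$, and verify that the infinite-rank difference actually obstructs the equality in every non-closed configuration, rather than leaning on the spectral-projection computation alone.
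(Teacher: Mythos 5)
Your argument is correct and follows essentially the same route as the paper: work in the standard model of Theorem 2.1, compute $P+Q$ blockwise, translate the spectral behaviour of the middle block near $0$ into the spectral behaviour of $S$ near $1$ (your identity $(T-I)^2=S\oplus S$ is just a more explicit form of the paper's ``apply the spectral theorem for $S$''), and invoke Theorem 2.3. Your extra bookkeeping in the non-closed case --- distinguishing whether ${P}\bigvee_e{Q}$ is even defined and checking that the difference is an infinite-rank projection --- is a careful refinement of a point the paper passes over quickly, not a different method.
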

\begin{proof}
We first suppose that $\mathcal{P}+\mathcal{Q}$ is closed. Using the notation in Theorem 2.1,  we have that
$$ P+Q=
\left(
\begin{array}{cccccc}
 I_{\mathcal{H}_0} & 0 & 0 & 0 & 0 & 0\\
 0 & 2I_{\mathcal{H}_1} & 0 & 0 & 0 & 0\\
 0 & 0 & I_{\mathcal{H}'}+S & \sqrt{S(I_{\mathcal{H}'}-S)} & 0 & 0\\
 0 & 0 & \sqrt{S(I_{\mathcal{H}'}-S)} & I_{\mathcal{H}'}-S  & 0 & 0       \\
  0 & 0 & 0 & 0 & I_{\mathcal{H}_2} & 0   \\
  0 &  0 & 0 & 0 & 0 & 0_{\mathcal{H}_3}
\end{array}
\right).$$
By Theorem 2.3  we know that $1\notin \sigma(S)$ when $\mathcal{P}+\mathcal{Q}$   is closed. Applying the spectral theorem to the operator $S$, one obtains that $0$ is isolated in    $\sigma(P+Q)$. This implies that  the notion ${P}\bigvee_e  {Q} $ makes sense and, in fact, it is the image in the Calkin algebra of the projection onto $\text{ran}(P+Q)$. Moreover, by the above representation of $P+Q$, one sees that
$$\text{ran}(P+Q)=\mathcal{H}_0\oplus\mathcal{H}_1\oplus \mathcal{H}'\oplus\mathcal{H}'\oplus \mathcal{H}_2 =\mathcal{P}+\mathcal{Q}.$$
It follows that ${P}\bigvee_e  {Q} $ is the image   of the projection onto $\mathcal{P}+\mathcal{Q}$.

On the other hand, in case that  $\pi( {P}\bigvee  {Q})=  {P}\bigvee_e  {Q} $, there exists $0<\varepsilon<1$  such that $(0,\varepsilon)\cap \sigma_e(P+Q)=\phi$ and ${P}\bigvee  {Q}-\mathbf{1}_{[\varepsilon,\infty]} (P+ Q )$ is a finite dimensional projection. Applying the spectral theorem for $S$ to the matrix representation of $P+ Q $, one sees that the spectral projection of $S$ for $(1-\varepsilon,1)$ is also  a finite dimensional projection. Combing this fact with that $1$ is not   in the point  spectrum of $S$, we have that $1\notin \sigma(S)$, which leads to the desired result using Theorem 2.3.
\end{proof}


While it seems  inconceivable  that $[p]+[q]$ is  always  closed for polynomials $p$ and $q$ in $\mathbb{C}[z_1,\cdots,z_n]$; here $[\cdot]$ denotes the closure  in the Hardy, Bergman  or Drury-Arveson modules on the unit ball,   it seems quite possible that the projections onto
$[p]$ and $[q]$ always almost commute. One thing making the answering of such a question difficult is the fact that $[p]\cap [q]$ is always large containing $[pq]$. One possible way to circumvent this problem might be to consider the quotient modules $[p]^\perp$ and $[q]^\perp$. We'll have something more to say about them in the next section.

Another possibility to handle the fact that $[p]\cap [q]$ is large might be to use  Theorem 2.10 to reduce the matter to $[p]/([p]\cap [q])$ and $[q]/([p]\cap [q]).$ In this case, $[p]/([p]\cap [q])$ and $[q]/([p]\cap [q]) $ are semi-invariant modules. We will obtain a result using this approach in the following section.

\section{Locality of Essentially Normal Projection}

Let $M$ be a compact metric space and $\sigma: C(M)\to \mathscr{L}(\mathcal{H})$ be a $*$-homomorphism for a  Hilbert space $\mathcal{H}$. We   say that a projection $P$ on $\mathcal{H}$ is essentially normal relative to $\sigma$ if $[\sigma(\varphi),P]\in\mathcal{K}$ for any $\varphi\in C(M)$. This implies that the map $\sigma_P: \varphi\to \pi(P\sigma(\varphi)P)\in\mathscr{Q}(\mathcal{H}) $ into the Calkin algebra  $\mathscr{Q}(\mathcal{H})=\mathscr{L}(\mathcal{H})/\mathcal{K}(\mathcal{H})$ is  a $*-$homomorphism. Hence, there exists
a compact subset $M_P$ of $M$ such that the following diagram commutes:
 \begin{equation*}
\begin{array}{ccc}
C(M) & \stackrel{\sigma}{\longrightarrow}   &  \mathscr{L}(\mathcal{H}) \\
    \downarrow   &          & \downarrow  \\
C(M_P) & \stackrel{\hat{\sigma}_P}{\longrightarrow}    &  \mathscr{Q}(P\mathcal{H}) \\
 \end{array}.
\end{equation*}
 Here the vertical arrow on the left is defined by restriction; that is,  $\varphi\to \varphi|_{M_P}$, and the one on the right is the compression to $\text{ran } P$ followed by the map onto the Calkin algebra. Therefore, using \cite{BDF}, one knows that $(\sigma, P)$ defines an element $[\sigma,P]\in K_1(M_ {P})$. An interesting question concerns
the relation of elements  $[\sigma,P]$ and $[\sigma,Q]$ for two essentially normal projections $P$ and $Q$ relative to $\sigma$.

 Now this relationship can't be too simple. In particular, consider the representation $\tau$ of $C(clos \mathbb{B}^n)$ in $L^2(\mathbb{B}^n)$ and the projection $P$ onto the Bergman space $L^2_a(\mathbb{B}^n)$. For $p\in \mathbb{C}[z_1,\cdots,z_n]$, one knows \cite{DW} that the projection $Q_p$ of $L^2(\mathbb{B}^n)$
 onto the closure $[p]$ of the ideal $(p)$ in $ \mathbb{C}[z_1,\cdots,z_n]$ generated by $p$ is essentially normal; that is, $[\tau(\varphi),Q_p]\in\mathcal{K}$ for $\varphi\in C(clos \mathbb{B}^n)$. Further,  we have that $R_p=P-Q_p $  is also essentially normal and    $M_{[\tau,R_p]}\subseteq Z(p)\cap \partial \mathbb{B}^n$, where $Z(p)$ is the zero variety of the polynomial $p$.   It follows that the image of $[\tau, R_p]\in K_1(\partial \mathbb{B}^n)$ is zero since $Z(p)\cap \partial \mathbb{B}^n$ is a proper subset of $ \partial \mathbb{B}^n $. Therefore, one has $[\tau,P]=[\tau, Q_p]$ for every polynomial $p\in\mathbb{C}[z_1,\cdots,z_n]$. Hence, there is a great variety of essentially normal projections defining the same element in $K_1(\partial \mathbb{B}^n).$

However, we do have a result   for what happens at the opposite extreme.
\begin{thm}
Suppose that $P$ and $Q$ are essentially normal projections on the Hilbert space $\mathcal{H}$ for the $*-$homomorphism $\sigma: C(M)\to \mathscr{L}(\mathcal{H})$ for some compact space $M$. If $M_P\cap M_Q=\phi$, then $PQ\in \mathcal{K}.$
\end{thm}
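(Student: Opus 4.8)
The plan is to separate $M_P$ and $M_Q$ by a continuous function and then push the resulting relations into the Calkin algebra $\mathscr{Q}(\mathcal{H})$, where essential normality converts them into purely algebraic identities. Since $M$ is a compact metric space, hence normal, and $M_P,M_Q$ are disjoint compact subsets, Urysohn's lemma produces $\varphi\in C(M)$ with $0\le\varphi\le1$, $\varphi\equiv 1$ on $M_P$, and $\varphi\equiv 0$ on $M_Q$. Writing $\pi$ for the quotient map onto $\mathscr{Q}(\mathcal{H})$ and abbreviating $\dot T=\pi(T)$, the desired conclusion $PQ\in\mathcal{K}$ is exactly the statement $\dot P\,\dot Q=0$.

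First I would record what the commuting diagram defining $M_P$ and $M_Q$ says about this $\varphi$. By construction the homomorphism $\sigma_P:\psi\mapsto\pi(P\sigma(\psi)P)$ factors through restriction to $M_P$, so its kernel is precisely $\{\psi\in C(M):\psi|_{M_P}=0\}$. Because $\varphi-1$ vanishes on $M_P$, we get $\sigma_P(\varphi)=\sigma_P(1)$, and, $\sigma$ being unital, this reads $P\sigma(\varphi)P\equiv P\pmod{\mathcal{K}}$. Likewise $\varphi$ vanishes on $M_Q$, so $\varphi\in\ker\sigma_Q$ and $Q\sigma(\varphi)Q\in\mathcal{K}$; here one also uses that a compact operator on $P\mathcal{H}$ (resp. $Q\mathcal{H}$) is compact on $\mathcal{H}$. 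In $\mathscr{Q}(\mathcal{H})$ these two facts become $\dot P\,\dot\sigma(\varphi)\,\dot P=\dot P$ and $\dot Q\,\dot\sigma(\varphi)\,\dot Q=0$.

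Now I would invoke essential normality. Since $[\sigma(\varphi),P]$ and $[\sigma(\varphi),Q]$ lie in $\mathcal{K}$, the element $\dot\sigma(\varphi)$ commutes with both $\dot P$ and $\dot Q$. Hence $\dot P=\dot P\,\dot\sigma(\varphi)\,\dot P=\dot\sigma(\varphi)\,\dot P\,\dot P=\dot\sigma(\varphi)\,\dot P$, and by commutativity also $\dot P\,\dot\sigma(\varphi)=\dot P$; similarly $0=\dot Q\,\dot\sigma(\varphi)\,\dot Q=\dot\sigma(\varphi)\,\dot Q$. Combining these,
$$\dot P\,\dot Q=(\dot P\,\dot\sigma(\varphi))\,\dot Q=\dot P\,(\dot\sigma(\varphi)\,\dot Q)=\dot P\cdot 0=0,$$
which is the assertion $PQ\in\mathcal{K}$.

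I expect the only genuine subtlety to be the second paragraph: translating the purely topological separation $M_P\cap M_Q=\phi$ into the two mod-compact identities. This rests on reading the defining diagram correctly, namely that $M_P$ is the support realizing $\ker\sigma_P$ as the ideal of functions vanishing on it, together with the (routine) unital normalization $\sigma(1)=I$ and the identification of $\mathcal{K}(P\mathcal{H})$ inside $\mathcal{K}(\mathcal{H})$. Once those identities are established, the Calkin-algebra computation is formal, and the commutativity furnished by essential normality does the remaining work.
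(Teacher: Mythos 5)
Your proof is correct and follows essentially the same route as the paper's: both choose $\varphi\in C(M)$ with $\varphi\equiv 1$ on $M_P$ and $\varphi\equiv 0$ on $M_Q$, use the factorizations through $C(M_P)$ and $C(M_Q)$ to get $\pi(P\sigma(\varphi)P)=\pi(P)$ and $\pi(Q\sigma(\varphi)Q)=0$, and then let essential normality turn these into the Calkin-algebra identity $\pi(P)\pi(Q)=0$. The only cosmetic difference is that the paper phrases the key step as $PQ$ "almost intertwining" the two compressed representations, whereas you commute $\dot\sigma(\varphi)$ past $\dot P$ and $\dot Q$ directly; the computations are the same.
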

\begin{proof}
By the assumption that $P$ and $Q$ are essentially normal relative to $\sigma$, one sees that the operator $PQ$ almost intertwines the two representations $\sigma|_{C(M_P)}$ and $\sigma|_{C(M_Q)}$; that is, one has that $P \sigma(\varphi)P (PQ)-(PQ) Q\sigma(\varphi)Q\in \mathcal {K}$ for $\varphi\in C(M)$. Thus, in the Calkin algebra, if $\varphi\in C(M)$ satisfies  $\varphi|_{M_P}\equiv 1$ and  $\varphi|_{M_Q}\equiv 0$, we obtain that $\pi (Q \sigma(\varphi)Q)\pi(PQ)=\pi(PQ)\pi(P\sigma(\varphi)P)$. But, $\pi (Q \sigma(\varphi)Q)=0$ and $\pi (P \sigma(\varphi)P)=\pi(P)$, this means that $\pi(PQ P)=0$, which implies $PQ\in\mathcal{K}$ and completes the proof.
\end{proof}
We can use this theorem to obtain a partial result concerning the relation of the projections onto $[p]$ and $[q]$ for $p,q\in\mathbb{C}[z_1,\cdots,z_n]$.
\begin{cor} For two polynomials $p,q\in\mathbb{C}[z_1,\cdots,z_n]$, let $P$ and $Q$ be the projections  onto the submodule $\mathcal{P}=[p],\mathcal{Q}=[q]$ on $L^2_a(\mathbb{B}^n)$, respectively.
If $p,q$ satisfy  $Z(p)\cap Z(q) \cap \partial \mathbb{B}^n=\phi$, then we have that  $[P,Q]\in \mathcal{K}$.
\end{cor}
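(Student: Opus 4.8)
The plan is to derive the corollary from Theorem 4.1, applied not to $P$ and $Q$ themselves but to the projections onto the complementary quotient modules. Applying Theorem 4.1 directly to $P$ and $Q$ is hopeless: since $[p]\cap[q]\supseteq[pq]$, the intersection $\mathcal{P}\cap\mathcal{Q}$ is infinite dimensional, so $PQ$ is far from compact and the supports $M_P$, $M_Q$ are in no way disjoint. The point is that the supports of the \emph{complementary} projections are small and, under the hypothesis, disjoint.

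To set things up I would work inside $L^2(\mathbb{B}^n)$ rather than $L^2_a(\mathbb{B}^n)$, since there is a genuine $*$-homomorphism there. Let $\tau:C(\mathrm{clos}\,\mathbb{B}^n)\to\mathscr{L}(L^2(\mathbb{B}^n))$ be multiplication, and let $P_0$ be the projection onto the Bergman space. Viewing $P$ and $Q$ as projections on $L^2(\mathbb{B}^n)$ (extended by $0$ off $L^2_a$), their ranges lie in $L^2_a$, so $[P,Q]$ is supported on $L^2_a$ and is compact on $L^2$ if and only if it is compact on $L^2_a$; thus nothing is lost. By the discussion preceding the corollary, $P_0$, $P$ and $Q$ are essentially normal relative to $\tau$ (essential normality of $P$, $Q$ being the content of \cite{DW}), whence $R_p:=P_0-P$ and $R_q:=P_0-Q$ are essentially normal too, being differences of essentially normal projections.

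The substantive input is the localization $M_{R_p}\subseteq Z(p)\cap\partial\mathbb{B}^n$ and $M_{R_q}\subseteq Z(q)\cap\partial\mathbb{B}^n$ recalled in the discussion above. The hypothesis $Z(p)\cap Z(q)\cap\partial\mathbb{B}^n=\phi$ then gives $M_{R_p}\cap M_{R_q}=\phi$, so Theorem 4.1 applied to the essentially normal pair $R_p$, $R_q$ yields $R_pR_q\in\mathcal{K}$; taking adjoints and using that $\mathcal{K}$ is self-adjoint gives $R_qR_p=(R_pR_q)^*\in\mathcal{K}$, hence $[R_p,R_q]\in\mathcal{K}$.

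It remains to transfer this to $P$ and $Q$. Since $\mathcal{P},\mathcal{Q}\subseteq L^2_a$, the projections $P$ and $Q$ are dominated by $P_0$, so $P_0P=P=PP_0$ and $P_0Q=Q=QP_0$; that is, $P_0$ commutes with both. Expanding $[R_p,R_q]=[P_0-P,\,P_0-Q]$ and cancelling every term containing $P_0$ leaves the identity $[R_p,R_q]=[P,Q]$, and therefore $[P,Q]\in\mathcal{K}$. The genuinely hard part is none of this algebra but the localization $M_{R_p}\subseteq Z(p)\cap\partial\mathbb{B}^n$, which simultaneously encodes the essential normality of the principal ideal $[p]$ from \cite{DW} and the fact that the quotient module $\mathcal{P}^\perp\cap L^2_a$ is supported over the boundary zero set; the device of passing to complements, which converts the large intersection obstruction into disjoint supports, is what makes Theorem 4.1 applicable.
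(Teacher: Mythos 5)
Your proposal is correct and follows essentially the same route as the paper: pass to the complementary projections onto the quotient modules, use the localization $M\subseteq Z(\cdot)\cap\partial\mathbb{B}^n$ from \cite{DW} to get disjoint supports, apply Theorem 4.1 to conclude their product is compact, and then use the identity $[P,Q]=[R_p,R_q]$. Your version is merely more explicit about working in $L^2(\mathbb{B}^n)$ with the Bergman projection $P_0$ and about the cancellation in the commutator, which the paper leaves implicit.
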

\begin{proof} Note that $I-P,I-Q$ are the projections onto the quotient modules $\mathcal{P}^\perp$ and $\mathcal{Q}^\perp$,  respectively.
Using the notation in the above, by \cite{DW} we know that $M_{I-P}\subseteq Z(p)\cap \partial \mathbb{B}^n$ and  $M_{I-Q}\subseteq Z(q)\cap \partial \mathbb{B}^n$. It follows from the hypothesis  that $M_{I-P}\cap M_{I-Q}=\phi$. By Theorem 4.1 we have that $(I-P)(I-Q) $ and $(I-Q)(I-P) $ are compact. Therefore, one sees that
$[P,Q]\in\mathcal{K}$, which completes the proof.
\end{proof}
We can extend this result using Theorem 2.10 as follows.
\begin{cor} For   polynomials $p,q,r\in\mathbb{C}[z_1,\cdots,z_n]$ with $Z(p)\cap Z(q)\cap\partial  \mathbb{B}^n=\phi$, let $\mathcal{P}=[pr]$ and $\mathcal{Q}=[qr]$ be the submodules in $L_a^2(\mathbb{B}^n)$. Then one has $[P,Q]\in\mathcal{K}$, where $P,Q$ are the projections onto $\mathcal{P}$ and $\mathcal{Q}$, respectively.
\end{cor}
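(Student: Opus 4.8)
The plan is to reduce, in the spirit of Theorem 2.10, to a pair of semi-invariant submodules whose essential supports on $\partial\mathbb{B}^n$ are disjoint, and then to invoke Theorem 4.1. Throughout, $\sigma$ is the Toeplitz $*$-homomorphism $\varphi\mapsto T_\varphi$ on $L^2_a(\mathbb{B}^n)$, relative to which the projection onto any principal submodule $[f]$ is essentially normal; this is the input from \cite{DW} that I will use repeatedly.

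First I would set $\mathcal{R}^\sharp=[pqr]$. Since $pqr=q\cdot(pr)=p\cdot(qr)$, one has $\mathcal{R}^\sharp\subseteq\mathcal{P}\cap\mathcal{Q}$, so the projection $R^\sharp$ onto $\mathcal{R}^\sharp$ satisfies $PR^\sharp=R^\sharp P=R^\sharp=QR^\sharp=R^\sharp Q$; hence $R^\sharp$ commutes with both $P$ and $Q$, and $[P,Q]=[\tilde P,\tilde Q]$, where $\tilde P=P-R^\sharp$ and $\tilde Q=Q-R^\sharp$ are the projections onto the semi-invariant submodules $\mathcal{P}\ominus\mathcal{R}^\sharp$ and $\mathcal{Q}\ominus\mathcal{R}^\sharp$. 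This realizes the reduction of Theorem 2.10 on the orthogonal complement of $\mathcal{R}^\sharp$. Because $\mathcal{R}^\sharp=[pqr]$ is a principal submodule, $R^\sharp$ is essentially normal by \cite{DW}, and therefore so are $\tilde P$ and $\tilde Q$. Thus it suffices to show $\tilde P\tilde Q\in\mathcal{K}$, and the two algebraic facts driving the argument are $M_q[pr]\subseteq[pqr]=\mathcal{R}^\sharp$ and $M_p[qr]\subseteq[pqr]=\mathcal{R}^\sharp$.

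The key step is to prove $M_{\tilde P}\subseteq Z(q)\cap\partial\mathbb{B}^n$ and, symmetrically, $M_{\tilde Q}\subseteq Z(p)\cap\partial\mathbb{B}^n$. For the first, let $\varphi\in C(\overline{\mathbb{B}^n})$ vanish on $Z(q)\cap\partial\mathbb{B}^n$; after a routine cut-off approximation it is enough to treat $\varphi$ whose support is a compact subset of $\{q\neq 0\}$, so that $1/q$ extends to some $g\in C(\overline{\mathbb{B}^n})$ with $\varphi=\varphi g\,q$ on $\partial\mathbb{B}^n$. Since Toeplitz semicommutators with continuous symbols are compact, $T_\varphi\equiv T_{\varphi g}M_q\pmod{\mathcal{K}}$. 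Now $M_q\tilde P$ has range in $M_q[pr]\subseteq\mathcal{R}^\sharp$, so $M_q\tilde P=R^\sharp M_q\tilde P$; moving $T_{\varphi g}$ past $R^\sharp$ by essential normality of $R^\sharp$ and using $\tilde P R^\sharp=0$ gives
\[
\tilde P T_\varphi\tilde P\equiv\tilde P T_{\varphi g}R^\sharp M_q\tilde P\equiv\tilde P R^\sharp T_{\varphi g}M_q\tilde P=0\pmod{\mathcal{K}}.
\]
Hence $\hat\sigma_{\tilde P}(\varphi)=0$ for every such $\varphi$, which yields $M_{\tilde P}\subseteq Z(q)\cap\partial\mathbb{B}^n$; interchanging $p$ and $q$ and using $M_p[qr]\subseteq\mathcal{R}^\sharp$ gives $M_{\tilde Q}\subseteq Z(p)\cap\partial\mathbb{B}^n$.

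Finally, the hypothesis $Z(p)\cap Z(q)\cap\partial\mathbb{B}^n=\emptyset$ forces $M_{\tilde P}\cap M_{\tilde Q}=\emptyset$, so Theorem 4.1 yields $\tilde P\tilde Q\in\mathcal{K}$, and then $\tilde Q\tilde P=(\tilde P\tilde Q)^*\in\mathcal{K}$, whence $[P,Q]=[\tilde P,\tilde Q]\in\mathcal{K}$. I expect the main obstacle to be the support computation of the previous paragraph: one must verify that passing to the semi-invariant quotient genuinely removes the common boundary zero set $Z(r)\cap\partial\mathbb{B}^n$ from the support of each piece, leaving only $Z(q)\cap\partial\mathbb{B}^n$ and $Z(p)\cap\partial\mathbb{B}^n$. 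This is precisely what the factorization $\varphi=\varphi g\,q$ together with the containment $M_q[pr]\subseteq\mathcal{R}^\sharp$ and the essential normality of $R^\sharp$ accomplish. If one prefers to take the larger common submodule $\mathcal{R}^\sharp=[pr]\cap[qr]$, then the essential normality of $R^\sharp$ is no longer automatic and must instead be obtained from Corollary 2.7, after first checking via Theorem 2.10 that $[pr]+[qr]$ is closed.
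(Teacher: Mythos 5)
Your proof is correct, and it follows the same overall strategy as the paper --- subtract a common sub-projection from $P$ and $Q$, localize the essential supports of the two resulting semi-invariant pieces, and apply Theorem 4.1 --- but with a genuinely different choice of the common piece. The paper reduces \emph{upward} through $[r]$: it writes $[P,Q]=[R-P,R-Q]$ with $R$ the projection onto $[r]$, and asserts (by ``generalizing the argument in \cite{DW}'') that $M_{[pr]^\perp/[r]^\perp}\subseteq Z(p)\cap\partial\mathbb{B}^n$ and $M_{[qr]^\perp/[r]^\perp}\subseteq Z(q)\cap\partial\mathbb{B}^n$. You reduce \emph{downward} through $[pqr]$: you write $[P,Q]=[\tilde P,\tilde Q]$ with $\tilde P,\tilde Q$ the projections onto $[pr]\ominus[pqr]$ and $[qr]\ominus[pqr]$, and prove the (suitably swapped) containments $M_{\tilde P}\subseteq Z(q)\cap\partial\mathbb{B}^n$, $M_{\tilde Q}\subseteq Z(p)\cap\partial\mathbb{B}^n$ from the algebraic inclusions $M_q[pr]\subseteq[pqr]$, $M_p[qr]\subseteq[pqr]$. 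The trade-off: the paper's route is shorter on the page but leaves the key localization as an unproved extension of \cite{DW}, whereas your route makes that step explicit, using only the essential normality of the single principal submodule $[pqr]$ together with compactness of Toeplitz semicommutators; the price is the slightly fussier cut-off step, where you should note that the part of $\varphi$ supported in the interior of $\mathbb{B}^n$ (where $q$ may still vanish) is disposed of because compactly supported symbols give compact Toeplitz operators on the Bergman space. Your identity $\varphi=\varphi g\,q$ in fact holds on all of $\overline{\mathbb{B}^n}$ with the natural choice of $g$, not just on the boundary, though the boundary version suffices. The closing remark about taking $\mathcal{R}^\sharp=[pr]\cap[qr]$ is a reasonable aside but is not needed.
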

\begin{proof}
One can generalize the argument in \cite{DW} to show that $$M_{[pr]^\perp/[r]^\perp}\subseteq Z(p)\cap \partial \mathbb{B}^n  \, and \, M_{[qr]^\perp/[r]^\perp}\subseteq Z(q)\cap \partial \mathbb{B}^n.$$   By   Theorem 4.1, this implies that $(R-P)(R-Q)$ and $(R-Q)(R-P) $ are compact,  where $R$ is the projection onto the submodule $[r].$ This means that $[P,Q]=[R-P,R-Q]\in\mathcal{K}$, which completes the proof.
\end{proof}
Another example of the application of the notion of the locality of essentially normal projection is the following result which is more or less the opposite situation of the previous theorem.
\begin{thm}
Assume that $\sigma: C(M) \to \mathscr{Q}(\mathcal{H})$ is a $*-$homomorphism on
 the Hilbert space $\mathcal{H}$ for a compact metric space $M$, and $P$ and $Q$ are two essentially normal projections such that $\mathcal{P}\cap \mathcal{Q}^\perp=\mathcal{P}^\perp \cap \mathcal{Q}=\{0\}$, where   $\mathcal{P}=\text{ran} P$ and $\mathcal{Q}=\text{ran}Q$.  If
 $\mathcal{P}+\mathcal{Q}^\perp$ is closed, then $M_P=M_Q$ and $[\hat{\sigma}_P]=[\hat{\sigma}_Q]\in K_1(M_P)$.
\end{thm}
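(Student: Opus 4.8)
The plan is to place $P$ and $Q$ in the standard model of Theorem 2.1, read off from the three hypotheses that $QP$ restricts to a bounded invertible map $\mathcal{P}\to\mathcal{Q}$, show that this invertible intertwines the two extensions $\hat{\sigma}_P$ and $\hat{\sigma}_Q$ modulo $\mathcal{K}$, and finally upgrade it, via polar decomposition, to a unitary equivalence of extensions that yields both $M_P=M_Q$ and the equality of $K$-homology classes.

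First I would translate the hypotheses into the decomposition $\mathcal{H}=\mathcal{H}_0\oplus\mathcal{H}_1\oplus\mathcal{H}'\oplus\mathcal{H}'\oplus\mathcal{H}_2\oplus\mathcal{H}_3$ of Theorem 2.1. Since on $\mathcal{H}_0$ one has $P=I,Q=0$ and on $\mathcal{H}_2$ one has $P=0,Q=I$, and since $0\notin\sigma_p(S)$ in the model, one checks $\mathcal{H}_0=\mathcal{P}\cap\mathcal{Q}^\perp$ and $\mathcal{H}_2=\mathcal{P}^\perp\cap\mathcal{Q}$; hence the intersection hypotheses give $\mathcal{H}_0=\mathcal{H}_2=\{0\}$, so $\mathcal{P}=\mathcal{H}_1\oplus\mathcal{H}'$. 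For $x=h_1\oplus h'\in\mathcal{P}$ one computes, using $X=\sqrt{S(I_{\mathcal{H}'}-S)}$ and $S^2+X^2=S$, that $\|QPx\|^2=\|h_1\|^2+\langle Sh',h'\rangle$ while $\|x\|^2=\|h_1\|^2+\|h'\|^2$. Because $\|x\|^2=\|QPx\|^2+\|(I-Q)Px\|^2$, the minimal-angle criterion $\|(I-Q)P|_{\mathcal{P}}\|<1$ that characterizes closedness of $\mathcal{P}+\mathcal{Q}^\perp$ (given $\mathcal{P}\cap\mathcal{Q}^\perp=\{0\}$) is exactly the statement that $QP|_{\mathcal{P}}$ is bounded below, i.e. $0\notin\sigma(S)$; this is the case $1\notin\sigma(I_{\mathcal{H}'}-S)$ of Theorem 2.3 applied to the pair $P$, $I-Q$.

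Next I would verify that $T:=QP|_{\mathcal{P}}:\mathcal{P}\to\mathcal{Q}$ is invertible: it is bounded below by the previous step, its kernel is $\mathcal{P}\cap\mathcal{Q}^\perp=\{0\}$, and its range is dense because $(QP|_{\mathcal{P}})^\ast=PQ|_{\mathcal{Q}}$ has kernel $\mathcal{Q}\cap\mathcal{P}^\perp=\{0\}$; being bounded below with dense range, $T$ maps $\mathcal{P}$ invertibly onto $\mathcal{Q}$. Writing $A_\varphi=P\sigma(\varphi)P$ and $B_\varphi=Q\sigma(\varphi)Q$ and using $[\sigma(\varphi),P],[\sigma(\varphi),Q]\in\mathcal{K}$, a short computation gives $TA_\varphi=QP\sigma(\varphi)P\equiv Q\sigma(\varphi)P$ and $B_\varphi T=Q\sigma(\varphi)QP\equiv Q\sigma(\varphi)P \pmod{\mathcal{K}}$, so $TA_\varphi-B_\varphi T\in\mathcal{K}$; passing to the Calkin algebra, $\pi(T)\hat{\sigma}_P(\varphi)=\hat{\sigma}_Q(\varphi)\pi(T)$.

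The main obstacle is the final passage from an invertible intertwiner to equality of $K$-homology classes, which I would handle by polar decomposition. Taking adjoints in the intertwining relation shows that $\pi(T^\ast T)$ commutes with the range of $\hat{\sigma}_P$; since $T^\ast T$ is invertible, so is $\pi(|T|)$ with $|T|=(T^\ast T)^{1/2}$, and as a continuous function of $T^\ast T$ it too commutes with $\hat{\sigma}_P$ modulo $\mathcal{K}$. Hence $U:=T|T|^{-1}:\mathcal{P}\to\mathcal{Q}$ is a unitary satisfying $\pi(U)\hat{\sigma}_P(\varphi)\pi(U)^\ast=\hat{\sigma}_Q(\varphi)$. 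Equality of kernels is then immediate, $\hat{\sigma}_P(\varphi)=0\iff\hat{\sigma}_Q(\varphi)=0$, so both quotient representations of $C(M)$ factor through the same restriction map and $M_P=M_Q$. Finally, $\pi(U)$ implements a unitary equivalence of the monomorphic extensions $\hat{\sigma}_P,\hat{\sigma}_Q:C(M_P)\to\mathscr{Q}(\cdot)$, so by the Brown--Douglas--Fillmore identification of equivalence classes of such extensions with $K_1(M_P)$ they define the same element, i.e. $[\hat{\sigma}_P]=[\hat{\sigma}_Q]$. The delicate points to watch are that $|T|$ genuinely essentially commutes with $\hat{\sigma}_P$ (so that $U$, and not merely $T$, is an intertwiner) and that the BDF equivalence is being applied to extensions over the common space $M_P$.
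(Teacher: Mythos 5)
Your proposal is correct and takes essentially the same route as the paper: both reduce to the two-projection model, use the closedness of $\mathcal{P}+\mathcal{Q}^\perp$ together with Theorem 2.3 to conclude $0\notin\sigma(S)$ and hence that the compression of $Q$ (the paper uses $P'Q'$ on the reduced pieces, you use $QP|_{\mathcal{P}}$) is an invertible operator from one range to the other that essentially intertwines $\hat{\sigma}_P$ and $\hat{\sigma}_Q$, and both finish by polar decomposition plus the Brown--Douglas--Fillmore classification. Your write-up merely supplies details the paper leaves implicit (the dense-range argument, the essential commutation of $|T|$ with $\hat{\sigma}_P$), so no substantive difference remains.
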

\begin{proof} 
In the representation theorem for $P,Q$, the spaces $\mathcal{H}_0$ and $\mathcal{H}_2$ are $\{0\}$ by assumption and we can write $\mathcal{P}=\mathcal{H}_1\oplus \mathcal{P}' $ and $\mathcal{Q}=\mathcal{H}_1\oplus \mathcal{Q}'$ corresponding to  ${P}'=P-I_{\mathcal{H}_1}$ and ${Q}'=Q-I_{\mathcal{H}_1}$. As in the proof of Theorem 4.1, the image $\pi(P'Q')$ of $P'Q'$ in the Calkin algebra intertwines the operators $\pi (P' \sigma(\varphi) P')$ and  $\pi (Q' \sigma(\varphi) Q')$.  Using Theorem 2.3 and the assumption $\mathcal{P}+\mathcal{Q}^\perp$ is closed, we have  $0\notin\sigma(S)=\sigma(P'Q'P')$. Combining this with the fact $\ker P'Q' =\mathcal{P}'^\perp \cap \mathcal{Q}'=\{0\}$, one sees that $P'Q':\mathcal{Q}'\to\mathcal{P}'$ is invertible.  Therefore, using the polar decomposition in the Calkin algebra, one sees that $M_P=M_Q$ and that the $K_1$ elements are equal.
\end{proof}
There would seem to be a limit to what can be concluded about the $K_1$ element. If $k\in K_1(M_P)$ for some essentially normal projection $P$ on the Hilbert space $\mathcal{H}$ with a $*$-homomorphism $\sigma: C(M)\to \mathscr{L}(\mathcal{H})$, then there exists an essentially normal projection $Q\leq P$ such that $[\sigma, Q]=k\in K_1(M_P)$.

\end{document}